\documentclass{siamltex}
\usepackage{amssymb}
\usepackage{bm} 
\usepackage{graphicx}
\usepackage{graphics}
\usepackage{caption2}
\usepackage{psfrag}
\usepackage{amsmath}
\usepackage{amscd}
\usepackage{amsfonts}
\usepackage{float}
\usepackage{latexsym}
\usepackage{lscape}
\usepackage{color}
\usepackage{soul}

\newtheorem{remark}{Remark}[section]
\newtheorem{assumption}{Assumption}[section]

\newcommand{\appvt}[1]{{{}\hat{v}_{#1}^{(n)}}^\top}
\newcommand{\appv}[1]{\hat{v}_{#1}^{(n)}}

\newcommand{\vv}[1]{ v_{#1}^{(n)} }
\newcommand{\ds}{\displaystyle}
\newcommand{\vt}[1]{{v_{#1}^{(n)}}^\top}

\usepackage[noend]{algpseudocode}
\usepackage{algorithmicx,algorithm}
\begin{document}
\title{Convergence analysis of discrete high-index saddle dynamics}
\author{
Yue Luo\thanks{Beijing International Center for Mathematical Research, Peking University, Beijing 100871, China (moonluo@pku.edu.cn)}
\and
Xiangcheng Zheng\thanks{School of Mathematical Sciences, Peking University, Beijing 100871, China (zhengxch@math.pku.edu.cn)}
\and
Xiangle Cheng\thanks{Huawei 2012 Network Technology Lab, Beijing 100095, China (chengxiangle1@huawei.com)}
\and
Lei Zhang\thanks{Beijing International Center for Mathematical Research, Center for Machine Learning Research, Center for Quantitative Biology, Peking University, Beijing 100871, China (zhangl@math.pku.edu.cn)}
} 
\maketitle

\begin{abstract}
Saddle dynamics is a time continuous dynamics to efficiently compute the any-index saddle points and construct the solution landscape. In practice, the saddle dynamics needs to be discretized for numerical computations, while the corresponding numerical analysis are rarely studied in the literature, especially for the high-index cases. In this paper we propose the convergence analysis of discrete high-index saddle dynamics. To be specific, we prove the local linear convergence rates of numerical schemes of high-index saddle dynamics, which indicates that the local curvature in the neighborhood of the saddle point and the accuracy of computing the eigenfunctions are main factors that affect the convergence of discrete saddle dynamics. The proved results serve as compensations for the convergence analysis of high-index saddle dynamics and are substantiated by numerical experiments.
\end{abstract}

\begin{keywords}
saddle dynamics, solution landscape, saddle point, convergence 
\end{keywords}

\begin{AMS}
37M05, 37N30, 65L20
\end{AMS}

\pagestyle{myheadings}
\thispagestyle{plain}

\markboth{Luo, Zheng, Cheng and Zhang
}{Convergence analysis of discrete high-index saddle dynamics}

\section{Introduction}
Locating the saddle points in complex systems has been of broad interest in many fields of scientific research. A large spectrum of examples includes finding the critical nuclei and transition pathways in phase transformations \cite{cheng2010nucleation, Han2019transition, samanta2014microscopic, wang2010phase, Yin2020nucleation, zhang2007morphology}, the defect configurations in liquid crystals\cite{Han2021,han2021solution,wang2021modeling, YinPRL,yin2022solution}, the transition rates in chemical reactions and biology \cite{baker1986algorithm, wales2003energy}. Meanwhile, various numerical algorithms have been proposed to compute the saddle points and applied to different practical problems, such as the gentlest ascent dynamics \cite{gao2015iterative,gad}, the dimer-type methods \cite{gould2016dimer, henkelman1999dimer, zhangdu2012, zhang2016optimization}, the minimax method \cite{li2001minimax}, the activation-relaxation technique \cite{cances2009some}, {\it etc}. We refer to \cite{vanden2010transition, henkelman2002methods, zhang2016recent} as some excellent reviews.

Recently, the high-index saddle dynamics has been proposed to serve as a powerful instrument in finding the any-index saddle points \cite{yin2019high}. It plays a key role of construction of the solution landscapes \cite{YinPRL,YinSCM}. Here $x^*$ is called a non-degenerate index$-k$ ($1\leq\ k \in\mathbb{N}$) saddle point of $E(x)$ if the gradient $\nabla E(x^*)=0$ and the Hessian $\nabla^2 E(x^*)$ has exactly $k$ negative eigenvalues with no zero eigenvalue. The saddle dynamics for an index$-k$ saddle point reads
\begin{equation}
    \left\{
    \begin{aligned}
    \frac{dx}{dt} & =- \beta\bigg(I-2\sum_{i=1}^kv_iv_i^\top\bigg)\nabla E(x),\\
    \frac{dv_i}{dt} &=- \gamma\bigg(I-v_iv_i^\top-2\sum_{j=1}^{i-1}v_jv_j^\top\bigg)\nabla^2E(x)v_i,\ 1\leq i \leq k,
    \end{aligned}
    \right.
    \label{saddle_dyna}
\end{equation}
where $\beta$, $\gamma$ are positive relaxation parameters. This dynamical system is derived by the formulation of the minimax optimization for an index$-k$ saddle point and the construction of the maximal subspace. It was shown in \cite{gad,YinSCM,yin2019high} that a linear stable steady state of (\ref{saddle_dyna}) is an index$-k$ saddle point of $E$. Therefore, investigating the convergence of the iterations in discrete saddle dynamics to its limit is of great importance in practical computations.

{There has been some progresses} on the convergence analysis of the index$-1$ saddle points. In \cite{zhangdu2012}, the shrinking dimer dynamics is proposed to find the index-1 saddles, and both linear local asymptotic stability analysis and optimal convergence rates are presented. \cite{gould2016dimer} proves the local linear convergence rate for a dimer-type saddle search algorithm with preconditioning and line search. \cite{ortner2017} extends the local convergence rate analysis for dimer and gentlest ascent saddle search algorithms to the estimation on the region of attraction of saddles. \cite{gao2015iterative} provides a saddle point search algorithm under the iterative minimization formulation and proves the local convergence rate on condition that each subproblem is solved exactly.

{Despite the growing numerical analysis for the index$-1$ saddle points, the analytical studies for the index$-k$ saddle dynamics are far from well developed. Since the index$-k$ ($k>1$) saddle point has more unstable directions than the index$-1$ saddle point, the index$-1$ solvers are limited to locate only index$-1$ saddle points and the high-index saddle dynamics is required to find the index$-k$ saddle points. Compared with the analysis of the index$-1$ saddle dynamics with only one unstable eigenvector, the high-index saddle dynamics needs to analyze the multi-dimensional unstable subspace spanned by $k$ eigenvectors, i.e., $v_1,...,v_k$. Due to the strong nonlinearity and the orthonormalization procedure, much more technical matrix decompositions and analysis are required to obtain the desired results.}
 Motivated by these discussions, the main contribution of this work lies in providing the convergence analysis of the discrete high-index saddle dynamics. We theoretically show the local linear convergence rates of the numerical schemes, which rely on the local curvature in the neighborhood of the target index$-k$ saddle point and the accuracy of computing the eigenvectors. 
The developed analysis and results offer a perspective for analyzing the gradient-based saddle point searching algorithms and provide mathematical supports for the convergence rates when implementing the saddle dynamics in applications.
%\begin{equation*}
%    u_i^{(n)} =\left\{ \begin{aligned}
%    &\vv{i}, \ &\text{exact eigenvector of $\nabla^2 E(x^{(n)})$},\\
%    &\appv{i}, \ &\text{approximate eigenvector of $\nabla^2 E(x^{(n)})$},\\
%    \end{aligned}
%    \right .
%\end{equation*}
%we prove the convergence result for conventional index$-1$ case and general index$-k$ case under Assumption \ref{original_asm}. We summarize our results as following conclusions.
%\begin{itemize}
%    \item If $u_i^{(n)}=v_i^{(n)}$, then $||x^{(n)}-x^*||_2\leq \mathcal{O}\left(\left(1-\frac{1}{\kappa}\right)^n\right)$ for some $\kappa\geq 1$ determined by the local curvature around $x^*$.
%    \item If $u_i^{(n)}=\vv{i}$, then $||x^{(n)}-x^*||_2\leq \mathcal{O}\left(\tau(\kappa,\alpha)^n\right)$, where $\tau\in(0,1)$ and depends on $\kappa$ and approximate error $\alpha$.
%\end{itemize}

The rest of the paper is organized as follows: In Section 2 we introduce the notations, assumptions and commonly-used algorithms for high-index saddle dynamics. In Section 3 we prove the auxiliary lemmas to be used in the convergence analysis. In Section 4 we prove the convergence results for the index-1 saddle dynamics under both exact and approximated eigenvectors. We then extend the developed results for index-$k$ saddle dynamics in Section 5. In Section 6 we present some numerical experiments to substantiate the theoretical findings. We finally address concluding remarks in Section 7.

\section{Preliminaries}
%We introduce several notations and assumptions for future use, and present numerical schemes to saddle dynamics (\ref{saddle_dyna}).

\subsection{Notations and assumptions} 
Let $E:\mathbb{R}^d\rightarrow \mathbb{R}$ be a real valued, twice differentiable energy function defined on $d-$dimensional Euclidean space and $x^*$ be a non-degenerate index-$k$ saddle point. Define $\|\cdot\|_2:\mathbb{R}^{d\times d}\rightarrow \mathbb{R}$ as the operator norm of $d\times d$ real matrices 
\[\|A\|_2=\max_{\|x\|_2=1}\|Ax\|_2,~~x\in\mathbb{R}^d,~~\|x\|^2_2: = \sum_{i=1}^d x_i^2.\] 
 Let $x^{(n)}\in\mathbb{R}^d$ be the position variable of the $n^{th}$ iteration. Denote eigen-pairs of $\nabla^2 E(x^{(n)})$ as  $\big\{(\lambda_i^{(n)},v_i^{(n)})\big\}_{i=1}^d$, i.e., $\nabla^2 E(x^{(n)})v_i^{(n)}=\lambda_i^{(n)}v_i^{(n)}$, $i=1,2,...,d$.  Here the eigenvalues are sorted in the following order
$\lambda_1^{(n)}\leq \lambda_2^{(n)} \leq ...\leq \lambda_d^{(n)}$ and the eigenvectors satisfy ${{} v_i^{(n)}}^\top  v_j^{(n)}=\delta_{ij}$ for $1\leq i,j\leq d$. Furthermore, for symmetric matrices $A$ and $B$, we denote $B\preceq A$ or $A\succeq B$ if $A-B$ is a positive semidefinite matrix.
%Denote $\hat{v}_i^{(n)}$ as the approximation of $v_i^{(n)}$ at the $n^{th}$ iteration by the numerical schemes of saddle dynamics presented in Section \ref{sec22}. We constrain the length of $\appv{i}$ such that $\|\appv{i}\|_2=1$ for $i=1,2,...,d$.

We make the following  assumptions  throughout our paper: 
\begin{assumption}
The initial position $x^{(0)}$ {\color{blue}is} in a neighborhood of $x^*$, i.e., $x^{(0)}\in U(x^*,\delta) =\{x|\|x-x^*\|_2<\delta\}$ for some $\delta>0$ such that
    \begin{enumerate}
    \item[(i)] There exists a constant $M>0$ such that $\|\nabla^2 E(x)-\nabla^2 E(y)\|_2\leq M\|x-y\|_2$ for $ x,y\in U(x^*,\delta)$;
   \item[(ii)] For any $x\in U(x^*,\delta)$, eigenvalues $\{\lambda_i\}_{i=1}^d$ of $\nabla^2E(x)$ satisfy $\lambda_1\leq\cdots\leq \lambda_k<0< \lambda_{k+1}\leq\cdots\leq\lambda_d$ 
         and there exist positive constants $0<\mu< L$ such that $|\lambda_i|\in[\mu,L]$ for $1\leq i\leq d$.
\end{enumerate}
\label{original_asm}
\end{assumption}
\begin{remark}
    The assumption (i) holds for all smooth $E(x)$, while the assumption (ii) is natural since the eigenvalues $\{\lambda_i^*\}_{i=1}^d$ of $\nabla^2 E(x^*)$ satisfy
$\lambda_1^*\leq \lambda_2^* \leq\cdots \leq\lambda_k^* <0<\lambda_{k+1}^*\leq \cdots\leq \lambda^*_{d}$ and $x\in U(x^*,\delta)$ could be close to $x^*$. {The assumption $x^{(0)}\in U(x^*,\delta)$ is standard to analyze the local convergence behavior of optimization algorithms \cite{nesterov2003introductory}. In practice, though $x^{(0)}$ may not be close enough to $x^*$ as assumed, the numerical solution $x^{(n)}$ at the $n$th step will approach $x^*$ and then come into some small neighborhood of $x^*$ after certain steps since the index$-k$ saddle points are attractors of high-index saddle dynamics. As what we interest for the convergence result is the convergence behavior as $n\rightarrow \infty$, the assumption $x^{(0)}\in U(x^*,\delta)$ is usually reasonable without loss of generality.}
\end{remark}
{
\begin{remark}
	The assumption (ii) on the upper and lower bounds of eigenvalues follows from the conventional treatments of optimization methods \cite{nesterov2003introductory} for the sake of numerical analysis. For some specific and commonly-used functions, we are able to estimate $\mu$ and $L$. For instance, if $E(x)$ is quadratic, $L$ and $\mu$ could be estimated by Lanzcos methods. For general functions, we could choose $L$ large enough and $\mu$ small enough to ensure the validity of the assumption (ii). %In practical computations, with some step size criterion, e.g. the BB step size in \cite{yin2019high}, the convergence rate is in general better than our estimation and $\beta_n$ does not strictly follow our assumption. Therefore, the main purpose of the proposed analysis is to provide a theoretical estimate for the convergence of the numerical methods in terms of the properties of the Hessian matrix.  
\end{remark}

}

\subsection{Numerical implementation of saddle dynamics}\label{sec22}
When computing an index$-k$ saddle point, the saddle dynamics (\ref{saddle_dyna}) is often implemented under the framework of the following algorithm \cite{YinSCM,yin2019high}. 
\begin{algorithm}[H]
 \caption{Saddle dynamics (\ref{saddle_dyna}) for an index$-k$ saddle point}
 \hspace*{0.02in} {\bf Input:} 
 $k\in\mathbb{N}$, $x^{(0)}\in\mathbb{R}^d$, $\big\{\hat v_i^{(0)}\big\}_{i=1}^k\subset \mathbb{R}^d$ satisfying ${{}\hat v_i^{(0)}}^\top \hat v_j^{(0)}=\delta_{ij}$.\\
 \begin{algorithmic}[]
  \For{$n=0,1,...,T-1$}
     \State $\ds x^{(n+1)} = x^{(n)} - \beta_n\bigg(I-2\sum_{i=1}^k\hat v_i^{(n)}{{}\hat v_i^{(n)}}^\top\bigg)\nabla E(x^{(n)})$; \vspace{0.05in}
    %  \For {$i=1,2,...,k$}
    %   \State $u_i^* = u_i^{(n)} - \gamma_n(I-2u_i^{(n)}{{}u_i^{(n)}}^\top-2\sum_{j=1}^{i-1}u_i^{(n)}{{}u_i^{(n)}}^\top) $
    %  \EndFor
    \State $\big\{\hat v_i^{(n+1)}\big\}_{i=1}^k = \text{EigenSol}\big( \big\{\hat v_i^{(n)}\big\}_{i=1}^k, \nabla^2 E(x^{(n+1)}) \big)$.\vspace{0.1in}
  \EndFor
 \hspace{-0.2in} {\bf Return:} $x^{(T)}$
 \end{algorithmic}
 \label{alg_1}
\end{algorithm}
In Algorithm \ref{alg_1}, {the index $k$ is set as a priori. Consequently, the index-$k$ saddle dynamics is implemented to locate the desired index-$k$ saddle point.}
 EigenSol represents some specific eigenvector computation solver, with $\big\{\hat v_i^{(n)}\big\}_{i=1}^k$ as its initial values, for the computation of eigenvectors corresponding to $k-$smallest eigenvalues of $\nabla^2 E(x^{(n+1)})$. In \cite{yin2019high}, two numerical methods have been proposed. The first method is based on the explicit Euler discretization of the dynamics of $v_i$ in (\ref{saddle_dyna}) with the application of the dimer method for approximating the multiplication of the Hessian and the vector, i.e.,
\begin{equation*}
    \begin{aligned}
    & H(x^{(n+1)},\hat  v_i^{(n)}, l^{(n)}) = \frac{1}{2l^{(n)}}\left[ \nabla E (x^{(n+1)}+l^{(n)}\hat v_i^{(n)}) - \nabla E (x^{(n+1)}-l^{(n)}\hat v_i^{(n)}) \right],\\
    & \hat v_i^* =\hat  v_i^{(n)} - \gamma_n\left(I-\hat v_i^{(n)}{{}\hat v_i^{(n)}}^\top-2\sum_{j=1}^{i-1}\hat v_i^{(n)}{{}\hat v_i^{(n)}}^\top\right)H(x^{(n+1)}, \hat v_i^{(n)},l^{(n)}),\quad 1\leq i\leq k,\\
    & \big\{\hat v_i^{(n+1)}\big\}_{i=1}^k = \text{Orth}\big( \left\{\hat v_i^*\right\}_{i=1}^k\big).
    \end{aligned}
\end{equation*}
Here Orth refers to {the modified Gram-Schmidt orthonormalization for the numerical stability}. This procedure is equivalent to the one step simultaneous Rayleigh-quotient iterative minimization method \cite{longsine1980simultaneous}. The other approach is based on the Locally Optimal Block Preconditioned Conjugate Gradient (LOBPCG) method \cite{knyazev2001toward} and EigenSol is defined as one step LOBPCG iteration. Both methods generate orthonormal vectors, i.e., ${{}\hat v_i^{(n)}}^\top \hat v_j^{(n)}=\delta_{ij}$ for $1\leq i,j\leq d.$ 
\section{Auxiliary lemmas} We prove several auxiliary lemmas to support the subsequent analysis. The following crucial lemma is usually applied for analyzing the convergence of gradient descent method \cite{nesterov2003introductory}, which is based on the idea of contracting mappings.
\begin{lemma}
Let $\{r_n\}_{n\geq 0}$ be a non-negative series satisfying
\begin{equation*}
    r_{n+1}\leq (1-q)r_n + cr_n^2,\quad n\geq 0,~~q\in(0,1),~~c>0.
\end{equation*}

\begin{enumerate}
    \item[(a)]  If $\displaystyle r_{n}<\frac{q}{c}$ for some $n\geq 0$, then 
    $\displaystyle r_{n+1}<r_n<\frac{q}{c};$
    %\item If $\displaystyle r_{0}<\frac{q}{c}$, then for all $n\geq 0$ $\displaystyle r_{n+1}<r_{n}<\frac{q}{c}.$
    \item[(b)] If $\displaystyle r_{0}<\frac{q}{c}$, then 
    $\displaystyle r_{n+1}\leq \left(\frac{1}{1+q}\right)^{n+1}\frac{qr_0}{q-cr_0}$ for all $n\geq 0$.
\end{enumerate}
\label{useful_lemma}
\end{lemma}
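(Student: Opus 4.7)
The plan is to treat (a) as a direct one-step monotonicity argument and to handle (b) by inverting the recursion, switching to reciprocals, and shifting so that the natural fixed point sits at zero.

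For part (a), I would factor the hypothesis as $r_{n+1} \le r_n\bigl((1-q) + cr_n\bigr)$. Under $r_n < q/c$ the bracket is strictly less than $1$, so $r_{n+1} < r_n$ is immediate; combined with $r_n < q/c$ this also gives $r_{n+1} < q/c$, so the property propagates for all $m \ge n$.

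For part (b), I first apply (a) iteratively to conclude $r_n < q/c$ for every $n\geq 0$. Assuming without loss of generality that $r_0 > 0$ (otherwise $r_n \equiv 0$ and the bound is trivial), set $b_n := 1/r_n$, so that $b_n > c/q$. Inverting the factored recursion yields
\[
b_{n+1} \;\ge\; \frac{b_n^2}{(1-q)b_n + c}.
\]
The crucial step is the affine lower bound $b_{n+1} \ge (1+q)b_n - c$. After clearing the (positive) denominator, this reduces to $q^2 b_n^2 - 2qcb_n + c^2 \ge 0$, i.e.\ the perfect-square identity $(qb_n - c)^2 \ge 0$, which is trivially true. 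Rewriting as $b_{n+1} - c/q \ge (1+q)\bigl(b_n - c/q\bigr)$ and iterating gives
\[
\frac{1}{r_n} - \frac{c}{q} \;\ge\; (1+q)^n\,\frac{q - cr_0}{qr_0}.
\]
Dropping the positive term $c/q$ on the left and inverting produces $r_n \le qr_0/\bigl((1+q)^n(q-cr_0)\bigr)$, which is exactly the stated inequality after shifting the index from $n$ to $n+1$.

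The main obstacle is guessing the right substitution to linearize the quadratic recursion: the shift $b_n - c/q$ is dictated by the (nontrivial) fixed point of the target linear map $b \mapsto (1+q)b - c$, and once this change of variable is in place the whole argument collapses into the perfect-square identity $(qb_n - c)^2 \ge 0$. Everything else is either a one-line monotonicity check or routine algebraic manipulation.
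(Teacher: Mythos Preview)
Your proposal is correct and follows essentially the same route as the paper. Both arguments prove (a) by the one-line monotonicity check and handle (b) by deriving the linearized recursion $b_{n+1}-c/q\ge(1+q)(b_n-c/q)$ (equivalently $q/r_{n+1}-c\ge(1+q)(q/r_n-c)$) and iterating; the only cosmetic difference is that the paper reaches this inequality via the factorization $1-s=(1-s^2)/(1+s)\le 1/(1+s)$ with $s=q-cr_n$, whereas you clear denominators and recognize the perfect square $(qb_n-c)^2\ge 0$, which is the same algebra unpacked differently. One tiny caveat: $r_0>0$ alone does not force all $r_n>0$, but if some $r_m=0$ then all later terms vanish and the bound is trivial, so your reciprocal substitution is safe after noting this.
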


\begin{proof}
 If $\ds r_n<\frac{q}{c}$, then $q-cr_n>0$ and consequently
%       \begin{equation*}
%             r_{n+1}\leq r_n(1-q+cr_n) = r_n\frac{1-(q-cr_n)^2}{1+(q-cr_n)}<r_n,
%       \end{equation*}
     \begin{equation*}
     {	r_{n+1}\leq r_n(1-q+cr_n)<r_n(1-q+c\cdot \frac{q}{c}) = r_n,}
     \end{equation*}
    which proves (a). To prove (b), we first find from (a) that $\ds r_0<\frac{q}{c}$ implies $\ds r_{n+1}<r_{n}<\frac{q}{c}$ for all $n\geq 0$.
Note that 
\begin{equation*}
\begin{aligned}
     r_{n+1}\leq r_n(1-q+cr_n) = r_n\frac{1-(q-cr_n)^2}{1+(q-cr_n)}\leq \frac{r_n}{1+(q-cr_n)},
\end{aligned}
\end{equation*}
which is equivalent to
\begin{equation*}
    \frac{q}{r_{n+1}} - c \geq \frac{q(1+q-cr_n)}{r_n}-c = (1+q)\left(\frac{q}{r_n}-c\right).
\end{equation*}
As $\frac{q}{r_n}-c>0$ for all $n\geq 0$, we get
\begin{equation*}
    \frac{q}{r_{n+1}}-c \geq (1+q)^{n+1}\left(\frac{q}{r_{0}}-c\right),
\end{equation*}
which leads to 
\begin{equation*}
    r_{n+1}\leq  \frac{q}{(1+q)^{n+1}(q/r_0-c)+c} \leq \left(\frac{1}{1+q}\right)^{n+1}\frac{qr_0}{q-cr_0}.
\end{equation*}
Thus we complete the proof.
\end{proof}

To analyze the convergence of the numerical saddle dynamics, the relation between $x^{(n+1)}-x^*$ and $x^{(n)}-x^*$ plays a critical role and we illustrate the recursion relation in the following lemma.
\begin{lemma}
    For any iteration scheme in the form of 
    \[x^{(n+1)} = x^{(n)} - \beta_n A^{(n)}\nabla E(x^{(n)}), \]
    where $A^{(n)}\in\mathbb{R}^{d\times d}$, we have the following identity
    \[x^{(n+1)}-x^* = \left[Q^{(n)} + B^{(n)}\right] (x^{(n)}-x^*)\]
   where
    \begin{equation*}
        \begin{aligned}
             &Q^{(n)} = I-\beta_n A^{(n)}\nabla^2 E(x^{(n)}),\\
             &B^{(n)} = \beta_n A^{(n)}\left[ \nabla^2 E(x^{(n)}) - \int_0^1 \nabla^2 E(x^*+t(x^{(n)}-x^*))dt\right].\\
        \end{aligned}
    \end{equation*}
    \label{recursion}
    Furthermore, if $x^{(n)} \in U(x^*,\delta)$, then we have
    \begin{equation*}
        \|B^{(n)}\|_2\leq \frac{1}{2}\beta_nM\|A^{(n)}\|_2\|x^{(n)}-x^*\|_2.
    \end{equation*}
\end{lemma}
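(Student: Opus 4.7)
The plan is to derive the claimed recursion by inserting $x^*$ into the iteration via the identity $\nabla E(x^*)=0$, which holds because Assumption \ref{original_asm}(ii) forces $x^*$ to be a critical point. First I would subtract $x^*$ from both sides of the update rule and write
\[
\nabla E(x^{(n)}) \;=\; \nabla E(x^{(n)}) - \nabla E(x^*) \;=\; \int_0^1 \nabla^2 E\bigl(x^*+t(x^{(n)}-x^*)\bigr)\,dt\,(x^{(n)}-x^*),
\]
via the fundamental theorem of calculus applied to the $C^1$ map $t\mapsto\nabla E(x^*+t(x^{(n)}-x^*))$. This turns the affine correction into a matrix acting on $x^{(n)}-x^*$.

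Next I would add and subtract the term $\beta_n A^{(n)}\nabla^2 E(x^{(n)})(x^{(n)}-x^*)$ inside the expression for $x^{(n+1)}-x^*$. The zero-order piece collects into $I-\beta_nA^{(n)}\nabla^2 E(x^{(n)})=Q^{(n)}$, and the leftover integral piece combines into exactly $B^{(n)}$ as defined in the statement. This yields $x^{(n+1)}-x^*=(Q^{(n)}+B^{(n)})(x^{(n)}-x^*)$.

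For the norm bound on $B^{(n)}$, the plan is to apply submultiplicativity of the operator norm to pull out $\beta_n\|A^{(n)}\|_2$, then move the norm inside the integral by the triangle inequality. Writing
\[
\nabla^2 E(x^{(n)}) - \int_0^1 \nabla^2 E\bigl(x^*+t(x^{(n)}-x^*)\bigr)\,dt \;=\; \int_0^1 \bigl[\nabla^2 E(x^{(n)})-\nabla^2 E(x^*+t(x^{(n)}-x^*))\bigr]\,dt,
\]
and using the Lipschitz bound in Assumption \ref{original_asm}(i) with $\|x^{(n)}-(x^*+t(x^{(n)}-x^*))\|_2=(1-t)\|x^{(n)}-x^*\|_2$ (which is legitimate since $x^{(n)}\in U(x^*,\delta)$ implies the line segment lies in $U(x^*,\delta)$ by convexity of the ball), the integrand is bounded by $M(1-t)\|x^{(n)}-x^*\|_2$. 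Integrating $(1-t)$ over $[0,1]$ produces the factor $1/2$, giving the claimed $\tfrac12\beta_nM\|A^{(n)}\|_2\|x^{(n)}-x^*\|_2$.

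Neither step carries a real obstacle; the only care needed is in correctly setting up the fundamental theorem of calculus representation and in verifying that the segment between $x^*$ and $x^{(n)}$ lies in the neighborhood $U(x^*,\delta)$ so that Assumption \ref{original_asm}(i) applies along the entire path of integration.
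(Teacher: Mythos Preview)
Your proposal is correct and follows essentially the same route as the paper: the integral mean-value representation of $\nabla E(x^{(n)})-\nabla E(x^*)$, the add-and-subtract of $\beta_nA^{(n)}\nabla^2E(x^{(n)})$, and the Lipschitz estimate with $\int_0^1(1-t)\,dt=\tfrac12$. One minor point: the identity $\nabla E(x^*)=0$ holds because $x^*$ is by definition a saddle point, not as a consequence of Assumption~\ref{original_asm}(ii).
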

\begin{proof}
    A direct calculation yields
    \begin{equation*}
    \begin{aligned}
         x^{(n+1)}-x^* & = x^{(n)} - x^* -\beta_n A^{(n)}\nabla E(x^{(n)}) \\
                       & = x^{(n)} - x^* -\beta_n A^{(n)}\left[\nabla E(x^{(n)})-\nabla E(x^*)\right] \quad (\text{Since $\nabla E(x^*)=0$})\\
                       & = \left[ I -\beta_n A^{(n)}\int_0^1\nabla^2 E(x^*+t(x^{(n)}-x^*))dt\right](x^{(n)} - x^*)\\
                       & =\left[Q^{(n)} + B^{(n)}\right] (x^{(n)}-x^*),
    \end{aligned}
    \end{equation*}
    { where in the third equality we used the integral residue of the Taylor expansion.}
    If $x^{(n)} \in B(x^*,\delta)$, then 
by assumption (i)
    \begin{equation*}
    \begin{aligned}
         \|B^{(n)}\|_2 
         &\leq \beta_n\|A^{(n)}\|_2\int_0^1 \|\nabla^2E(x^{(n)})-\nabla^2E(x^*+t(x^{(n)}-x^*))\|_2dt\\
         &\leq \beta_n\|A^{(n)}\|_2M\|x^{(n)}-x^*\|_2\int_0^1 1-t\,dt\\
         &= \frac{1}{2}\beta_nM\|A^{(n)}\|_2\|x^{(n)}-x^*\|_2,
        \end{aligned}
    \end{equation*}
    which completes the proof.
\end{proof}
        
\begin{lemma}
\label{Q}
For
   $\ds Q = I-\beta\bigg(\sum_{i=1}^dz_iu_iu_i^\top\bigg)$ where $\ u_i^\top u_j=\delta_{ij}$, $\beta>0,\ L\geq z_i\geq\mu> 0,\ 1\leq i\leq d$, the following estimate holds 
\begin{equation*}
    \|Q\|_2\leq \max\left\{|1-\beta L|,|1-\beta\mu|\right\}.
\end{equation*}
In particular, $\ds\beta=\frac{2}{L+\mu}$ leads to $\ds
    \|Q\|_2\leq \frac{L-\mu}{L+\mu}.$
\end{lemma}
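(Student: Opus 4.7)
The plan is to exploit the fact that $\sum_{i=1}^d z_i u_i u_i^\top$ is already presented in spectral form: since $\{u_i\}$ is an orthonormal basis of $\mathbb{R}^d$, this is the spectral decomposition of a symmetric matrix $S$ with eigenpairs $(z_i,u_i)$. Consequently $Q = I - \beta S$ is symmetric with the same eigenvectors $u_i$ and eigenvalues $1-\beta z_i$, so $\|Q\|_2$ equals the spectral radius $\max_{1\le i\le d}|1-\beta z_i|$.

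First I would make this spectral observation explicit (or, equivalently, expand an arbitrary unit vector $x$ in the basis $\{u_i\}$ as $x=\sum_i c_i u_i$ with $\sum_i c_i^2 = 1$, compute $Qx = \sum_i (1-\beta z_i) c_i u_i$, and read off $\|Qx\|_2^2 = \sum_i (1-\beta z_i)^2 c_i^2$). Then I would use convexity of the map $z\mapsto |1-\beta z|$ on $[\mu,L]$: the maximum over a closed interval of a convex function is attained at an endpoint, so $|1-\beta z_i|\le \max\{|1-\beta L|,|1-\beta \mu|\}$ for every $i$. Taking the supremum over unit vectors gives the first bound.

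For the particular choice $\beta = 2/(L+\mu)$, a direct computation gives $1-\beta\mu = (L-\mu)/(L+\mu)$ and $1-\beta L = -(L-\mu)/(L+\mu)$, so both endpoint values have absolute value $(L-\mu)/(L+\mu)$, yielding the stated sharp bound.

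There is no real obstacle here: everything reduces to the spectral theorem for symmetric matrices plus the endpoint-maximum property of a convex function on an interval. The only mild subtlety is reminding the reader that the representation $\sum_i z_i u_i u_i^\top$ (with orthonormal $u_i$) automatically constitutes a spectral decomposition, so no further diagonalization step is needed before reading off the eigenvalues of $Q$.
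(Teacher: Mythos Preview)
Your proposal is correct and follows essentially the same route as the paper: recognize that $\sum_i z_i u_i u_i^\top$ is a spectral decomposition, read off the eigenvalues $1-\beta z_i$ of $Q$, bound them by the endpoint values, and plug in $\beta=2/(L+\mu)$. The only cosmetic difference is that you phrase the endpoint bound via convexity of $z\mapsto|1-\beta z|$, whereas the paper simply uses $1-\beta L\le 1-\beta z_i\le 1-\beta\mu$ directly; both arguments are equivalent here.
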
    
\begin{proof}
Denote $N = \sum_{i=1}^dz_iu_iu_i^\top$ such that $z_i$ is the eigenvalue of $N$ and $u_i$ is the corresponding eigenvector. Then $1-\beta z_i$ are eigenvalues of $Q$ and 
\begin{equation*}
    1-\beta L\leq 1-\beta z_i \leq 1-\beta \mu,\ 1\leq i\leq d.
\end{equation*}
Thus $
    \|Q\|_2 =  \max_{1\leq i\leq d}|1-\beta z_i| \leq \max\{|1-\beta L|, |1-\beta\mu|\}$,
which completes the proof.
\end{proof}
    
\begin{lemma}\cite{golub1996matrix}
Let $W,Z\in\mathbb{R}^{n\times n}$, $W_1,Z_1\in\mathbb{R}^{n\times k}$ and $W_2,Z_2\in\mathbb{R}^{n\times (n-k)}$ such that
\begin{equation*}
    W = \left[W_1 , W_2\right],\quad Z = \left[Z_1, Z_2\right].
\end{equation*}
If $W$ and $Z$ are orthogonal matrices, i.e., $WW^T=ZZ^T=I_n$, then 
\begin{equation*}
    \|W_1W_1^T - Z_1Z_1^T\|_2 = \|W_1^TZ_2\|_2 = \|Z_1^TW_2\|_2.
\end{equation*}
\label{subspace}
\end{lemma}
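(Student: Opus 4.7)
The plan is to use the orthogonal invariance of the operator $2$-norm to reduce $W_1W_1^\top-Z_1Z_1^\top$ to a block anti-diagonal matrix, and then to identify the norm of that block matrix with the common value $\|W_1^\top Z_2\|_2=\|Z_1^\top W_2\|_2$. Geometrically, $W_1W_1^\top$ and $Z_1Z_1^\top$ are the orthogonal projectors onto the two $k$-dimensional subspaces spanned by the first $k$ columns of $W$ and $Z$ respectively, and the three quantities in the lemma are all standard measures of the sines of the principal angles between these subspaces.

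Concretely, I would first compute the sandwich $W^\top(W_1W_1^\top-Z_1Z_1^\top)Z$. Using $W^\top W=Z^\top Z=I_n$ together with the block structure, the product $W^\top W_1W_1^\top Z$ expands to $\begin{pmatrix}W_1^\top Z_1 & W_1^\top Z_2\\ 0 & 0\end{pmatrix}$, while $W^\top Z_1Z_1^\top Z$ expands to $\begin{pmatrix}W_1^\top Z_1 & 0\\ W_2^\top Z_1 & 0\end{pmatrix}$. The diagonal blocks cancel and one is left with the anti-diagonal matrix $\begin{pmatrix}0 & W_1^\top Z_2\\ -W_2^\top Z_1 & 0\end{pmatrix}$. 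Orthogonal invariance gives $\|W_1W_1^\top-Z_1Z_1^\top\|_2=\bigl\|\bigl(\begin{smallmatrix}0 & W_1^\top Z_2\\ -W_2^\top Z_1 & 0\end{smallmatrix}\bigr)\bigr\|_2$, and forming the Gram matrix of this anti-diagonal matrix shows that its $2$-norm equals $\max\{\|W_1^\top Z_2\|_2,\|W_2^\top Z_1\|_2\}$.

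Next I would establish that the two quantities in this maximum are equal. Using $Z_1Z_1^\top+Z_2Z_2^\top=I$ and $W_1W_1^\top+W_2W_2^\top=I$ one finds
\[(W_1^\top Z_2)(W_1^\top Z_2)^\top=W_1^\top(I-Z_1Z_1^\top)W_1=I_k-(W_1^\top Z_1)(W_1^\top Z_1)^\top,\]
\[(W_2^\top Z_1)^\top(W_2^\top Z_1)=Z_1^\top(I-W_1W_1^\top)Z_1=I_k-(W_1^\top Z_1)^\top(W_1^\top Z_1).\]
Because $W_1^\top Z_1$ is a square $k\times k$ matrix, $(W_1^\top Z_1)(W_1^\top Z_1)^\top$ and $(W_1^\top Z_1)^\top(W_1^\top Z_1)$ share the same full spectrum, so the right-hand sides above have identical largest eigenvalues, giving $\|W_1^\top Z_2\|_2=\|W_2^\top Z_1\|_2$. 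Combined with $\|W_2^\top Z_1\|_2=\|Z_1^\top W_2\|_2$ (invariance of the operator norm under transposition), the chain of equalities claimed in the lemma follows.

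The only subtle point is recognizing that $W_1^\top Z_1$ is square, so that $MM^\top$ and $M^\top M$ have identical full spectra (not merely the same nonzero eigenvalues); this is what allows both off-diagonal blocks to have the same $2$-norm. The remaining steps are routine block-matrix manipulations driven by the identities $W^\top W=I$ and $Z^\top Z=I$, so I expect no substantial obstacle beyond careful bookkeeping of the block decompositions.
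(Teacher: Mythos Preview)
Your argument is correct. The paper does not give its own proof of this lemma: it is simply quoted from Golub and Van Loan, \emph{Matrix Computations}, with no proof supplied. Your route---conjugating $W_1W_1^\top-Z_1Z_1^\top$ by the orthogonal matrices $W^\top$ and $Z$ to obtain the block anti-diagonal form, then using $Z_2Z_2^\top=I-Z_1Z_1^\top$ and $W_2W_2^\top=I-W_1W_1^\top$ to show the two off-diagonal blocks share the same operator norm---is essentially the standard proof given in that reference, so there is nothing to compare.
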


\section{Convergence rates of index-1 saddle dynamics}\label{sec4}
We start with the case of finding a non-degenerate index-1 saddle point $x^*$ of $E(x)$. Both the exact eigenvector $v^{(n)}_1$ of the smallest eigenvalue $\lambda_1^{(n)}$ of $\nabla^2 E(x^{(n)})$ and its approximation $\hat{v}_1^{(n)}$ computed via the schemes in Section \ref{sec22} will be applied in each iteration.

\subsection{The case of exact eigenvector $v^{(n)}_1$}\label{sec41}
Based on the scheme of the position variable
\begin{equation}
    x^{(n+1)} =  x^{(n)}-\beta_n\left(I-2v_1^{(n)}{v_1^{(n)}}^\top\right)\nabla E(x^{(n)}),\quad \|v_1^{(n)}\|_2=1,
    \label{exact_position}
\end{equation}
we first present the following single-step analysis.
\begin{theorem}
Under Assumption \ref{original_asm}, if 
$r_n:=\|x^{(n)}-x^*\|_2<\delta$
 and $\ds\beta_n = \frac{2}{L+\mu}$ for some $n\geq 0$, the following estimate holds
\begin{equation}\label{mh}
    r_{n+1}\leq \left(1-\frac{2\mu}{L+\mu}\right)r_n + \frac{Mr_n^2}{L+\mu}.
\end{equation}
  \label{exact_single}
\end{theorem}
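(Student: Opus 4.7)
The plan is to apply Lemma \ref{recursion} with $A^{(n)} = I - 2 v_1^{(n)} {v_1^{(n)}}^\top$, which writes
\[
x^{(n+1)} - x^* = [Q^{(n)} + B^{(n)}](x^{(n)} - x^*),
\qquad
Q^{(n)} = I - \beta_n A^{(n)} \nabla^2 E(x^{(n)}),
\]
and then bound $\|Q^{(n)}\|_2$ and $\|B^{(n)}\|_2$ separately via Lemma \ref{Q} and the second part of Lemma \ref{recursion}. Since $\|A^{(n)}\|_2 = 1$ (it is a Householder-type reflector, as $v_1^{(n)}$ is a unit vector), the second part of Lemma \ref{recursion} immediately gives $\|B^{(n)}\|_2 \leq \frac{1}{2}\beta_n M r_n = \frac{M r_n}{L+\mu}$.

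The key step is to show $\|Q^{(n)}\|_2 \leq (L-\mu)/(L+\mu) = 1 - 2\mu/(L+\mu)$. For this I would use the spectral decomposition $\nabla^2 E(x^{(n)}) = \sum_{i=1}^d \lambda_i^{(n)} v_i^{(n)} {v_i^{(n)}}^\top$ and compute, using orthonormality of $\{v_i^{(n)}\}$,
\[
A^{(n)} \nabla^2 E(x^{(n)})
= \sum_{i=1}^d \lambda_i^{(n)} v_i^{(n)} {v_i^{(n)}}^\top - 2\lambda_1^{(n)} v_1^{(n)} {v_1^{(n)}}^\top
= (-\lambda_1^{(n)}) v_1^{(n)} {v_1^{(n)}}^\top + \sum_{i=2}^d \lambda_i^{(n)} v_i^{(n)} {v_i^{(n)}}^\top.
\]
By Assumption \ref{original_asm}(ii) with $k=1$, we have $\lambda_1^{(n)} \in [-L,-\mu]$ and $\lambda_i^{(n)} \in [\mu,L]$ for $i \geq 2$, so the spectrum of $A^{(n)} \nabla^2 E(x^{(n)})$ lies entirely in $[\mu,L]$. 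This is the content of Lemma \ref{Q} with $z_1 = -\lambda_1^{(n)}$, $z_i = \lambda_i^{(n)}$ for $i \geq 2$, and the stated choice $\beta_n = 2/(L+\mu)$ then yields the bound $\|Q^{(n)}\|_2 \leq (L-\mu)/(L+\mu)$.

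Combining the two bounds via the triangle inequality,
\[
r_{n+1} \leq \bigl(\|Q^{(n)}\|_2 + \|B^{(n)}\|_2\bigr) r_n \leq \Bigl(1 - \frac{2\mu}{L+\mu}\Bigr) r_n + \frac{M r_n^2}{L+\mu},
\]
which is \eqref{mh}. The only subtle step is the sign-flip observation: the projector $I - 2 v_1^{(n)} {v_1^{(n)}}^\top$ in the position update is designed precisely so that the single negative eigenvalue of $\nabla^2 E(x^{(n)})$ is reversed, making the effective operator $A^{(n)} \nabla^2 E(x^{(n)})$ positive definite and placing us in the standard gradient-descent contraction regime covered by Lemma \ref{Q}. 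I do not anticipate a real obstacle beyond this observation; the hypothesis $x^{(n)} \in U(x^*,\delta)$ ensures Assumption \ref{original_asm} applies, and no further care is needed since we are dealing with exact eigenvectors.
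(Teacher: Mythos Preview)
Your proposal is correct and follows essentially the same approach as the paper: apply Lemma \ref{recursion} with $A^{(n)}=I-2v_1^{(n)}{v_1^{(n)}}^\top$, use $\|A^{(n)}\|_2=1$ to bound $\|B^{(n)}\|_2$, and observe via the eigen-decomposition that the reflector flips the sign of the negative eigenvalue so that Lemma \ref{Q} gives $\|Q^{(n)}\|_2\leq (L-\mu)/(L+\mu)$. The only cosmetic difference is that the paper writes $Q^{(n)}$ directly in the form required by Lemma \ref{Q}, whereas you first compute $A^{(n)}\nabla^2 E(x^{(n)})$ and then form $Q^{(n)}$; both are equivalent.
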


\begin{proof}
Based on the formulation (\ref{exact_position}), we choose $A^{(n)} = I-2v_1^{(n)}\vt{1}$ and apply Lemma \ref{recursion} to get
\begin{equation*}
    x^{(n+1)}-x^* = \left[Q^{(n)} + B^{(n)}\right] (x^{(n)}-x^*).
\end{equation*}
Applying triangular inequality leads to 
\begin{equation}
    r_{n+1}\leq \|Q^{(n)}\|_2r_n + \|B^{(n)}\|_2r_n.
    \label{exact_relation}
\end{equation}
By Lemma \ref{recursion} and $||A^{(n)}||_2=1$
\begin{equation}
    \|B^{(n)}\|_2\leq \frac{1}{2}M\beta_nr_n.
    \label{exact_1_b}
\end{equation}
Then {it remains} to estimate $||Q^{(n)}||_2$. By eigenvalue decomposition theorem
\begin{equation}\label{eigendecom}
    \nabla^2E(x^{(n)})  = \sum_{i=1}^d \lambda_i^{(n)}v_i^{(n)}\vt{i},
\end{equation}
we have
\begin{equation*}
    \begin{aligned}
          Q^{(n)} &= 
          I-\beta_n\left(I-2v_1^{(n)}\vt{1}\right)\left(\sum_{i=1}^d \lambda_i^{(n)}v_i^{(n)}\vt{i}\right)\\
          &=I-\beta_n\left(\sum_{i=2}^{d}\lambda_i^{(n)}v_i^{(n)}\vt{i}-\lambda_1^{(n)}v_1^{(n)}\vt{1}\right).
    \end{aligned}
\end{equation*}
Since $r_n<\delta$, $-\lambda_1^{(n)}$ and $\lambda_i^{(n)}$ for {$ 2\leq i\leq d$} belong to $[\mu,L]$, we apply Lemma \ref{Q} to get 
\begin{equation}
    \|Q^{(n)}\|_2\leq \frac{L-\mu}{L+\mu}. 
    \label{exact_1_q}
\end{equation}
Then we combine (\ref{exact_relation})--(\ref{exact_1_q}) to complete the proof.
\end{proof}

We then present the main theorem of this subsection.
\begin{theorem}\label{thmorg}
    Under Assumption \ref{original_asm}, if the initial point $x^{(0)}$ satisfies
    \[r_0=\|x^{(0)}-x^*\|_2<\min\{\delta,\hat{r}\},\ \hat{r}=\frac{2\mu}{M} \] 
   and $\displaystyle \beta_n = \frac{2}{L+\mu} $ for any $n\geq 0$, $x^{(n)}$ converges to $x^*$ as $n\rightarrow \infty$ 
    with the estimate on the convergence rate
    \begin{equation}
        r_n=\|x^{(n)}-x^*\|_2\leq \left(1-\frac{2}{\kappa+3}\right)^n\frac{\hat{r}r_0}{\hat{r}-r_0},~~\kappa=\frac{L}{\mu}.
        \label{exact_1_conv}
    \end{equation}
\end{theorem}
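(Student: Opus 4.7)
The proof is essentially an assembly of the single-step estimate from Theorem \ref{exact_single} with the contracting-mapping lemma (Lemma \ref{useful_lemma}), so my plan is to identify the parameters, verify the iteration-invariant hypothesis, and then translate the abstract rate into the form stated in \eqref{exact_1_conv}.

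First I would read \eqref{mh} as a recursion of the exact form required by Lemma \ref{useful_lemma}, namely $r_{n+1}\le (1-q)r_n + c r_n^2$ with
\[
q=\frac{2\mu}{L+\mu}\in(0,1),\qquad c=\frac{M}{L+\mu}.
\]
A direct calculation then shows that the threshold $q/c$ appearing in Lemma \ref{useful_lemma} coincides with the quantity $\hat r$ in the statement:
\[
\frac{q}{c}=\frac{2\mu/(L+\mu)}{M/(L+\mu)}=\frac{2\mu}{M}=\hat r.
\]
Thus the hypothesis $r_0<\min\{\delta,\hat r\}$ in the theorem is exactly what is needed to feed Theorem \ref{exact_single} into Lemma \ref{useful_lemma}.

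Next I need to guarantee that Theorem \ref{exact_single} can be applied at every step, not just at $n=0$. For this I would argue by induction on $n$: assuming $r_n<\min\{\delta,\hat r\}$, Theorem \ref{exact_single} yields \eqref{mh}, and Lemma \ref{useful_lemma}(a) then gives $r_{n+1}<r_n<\hat r$ and, in particular, $r_{n+1}<\delta$. So the hypothesis of Theorem \ref{exact_single} is preserved along the entire trajectory, and the single-step bound \eqref{mh} holds for every $n\ge 0$.

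Finally, with the recursion \eqref{mh} valid for every $n$, Lemma \ref{useful_lemma}(b) immediately yields
\[
r_{n}\le \Bigl(\frac{1}{1+q}\Bigr)^{n}\,\frac{q r_0}{q-cr_0}.
\]
The last step is pure algebra: substituting $q=2\mu/(L+\mu)$ one checks that
\[
\frac{1}{1+q}=\frac{L+\mu}{L+3\mu}=\frac{\kappa+1}{\kappa+3}=1-\frac{2}{\kappa+3},
\]
and substituting $(q,c)=(2\mu/(L+\mu),M/(L+\mu))$ one checks that $q r_0/(q-cr_0)=\hat r r_0/(\hat r-r_0)$. Putting these together gives \eqref{exact_1_conv}; since $1-2/(\kappa+3)\in(0,1)$, the convergence $r_n\to 0$ follows. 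There is no real obstacle here — the only thing worth being careful about is the induction step that preserves $r_n<\delta$ so that Assumption \ref{original_asm} (and hence Theorem \ref{exact_single}) remains applicable throughout the iteration.
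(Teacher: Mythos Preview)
Your proposal is correct and follows essentially the same approach as the paper: identify $q=\tfrac{2\mu}{L+\mu}$, $c=\tfrac{M}{L+\mu}$ so that $q/c=\hat r$, use induction with Theorem \ref{exact_single} and Lemma \ref{useful_lemma}(a) to propagate $r_n<\min\{\delta,\hat r\}$, and then apply Lemma \ref{useful_lemma}(b). You even spell out the final algebraic simplification to $1-\tfrac{2}{\kappa+3}$ and $\hat r r_0/(\hat r-r_0)$, which the paper leaves implicit.
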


\begin{proof}
By $r_0=\|x^{(0)}-x^*\|_2<\min\{\delta,\hat{r}\}$, we apply Theorem \ref{exact_single} with $n=0$ and Lemma \ref{useful_lemma} (a) with $q=\frac{2\mu}{L+\mu}\in(0,1]$ and $c=\frac{M}{L+\mu}>0$ to obtain
\begin{equation*}
    r_{1}<r_0<\min\{\delta,\hat{r}\},
\end{equation*}
which in turn implies that (\ref{mh}) holds for $n=1$.
Inductively, we could show that (\ref{mh}) holds for any $n\geq 0$. Then we apply Lemma \ref{useful_lemma} (b) to complete the proof.
\end{proof}

\subsection{The case of approximate eigenvector $\hat v^{(n)}_1$}
We generalize the convergence result in Section \ref{sec41} to {the case where the eigenvector} in each iteration is not exact and is computed from the numerical schemes in Section \ref{sec22}. In this case, the iteration scheme for the position variable reads
\begin{equation}
    x^{(n+1)} =  x^{(n)}-\beta_n\left(I-2\appv{1}\appvt{1}\right)\nabla E(x^{(n)}).
    \label{inexact_formulation}
\end{equation}
It is clear that there exists an $\alpha\in[0,1]$ such that
\begin{equation}
    1\geq |\vt{1}\hat{v}^{(n)}_1|^2 \geq 1-\alpha.
    \label{cos}
\end{equation}
In the following discussion, we assume (\ref{cos}) holds in each iteration.
\begin{remark}
Direct calculations show that
    \begin{equation*}
    |1 - \vt{1}\hat{v}^{(n)}_1 |  = |\vt{1} (\hat{v}_1^{(n)} - v_1^{(n)} )|\leq  \|\appv{1} - v_1^{(n)}\|_2.
    \end{equation*}
 It was proved in \cite{zhang2021optimalorder} that 
$\|\hat{v}_1^{(n)} - v_1^{(n)}\|_2 $
   has the first-order accuracy with respect to the time step size. Thus $\alpha$ could be arbitrarily close to $0$ by adjusting the time step size.
\end{remark}

\begin{lemma}
For $\ds
    D = \sum_{(i,j)\in S}\lambda_jc_ic_jv_iv_j^\top$ where $\ v_i^\top v_j=\delta_{ij}$, $\max_{1\leq j\leq d}|\lambda_j|\leq L$
and {$S=\{(i,j)|1\leq i,j\leq d\}\backslash\{(1,1)\}$}, the following estimate holds
\begin{equation*}
    \|D\|_2\leq L\|C_{-1}\|_2^2 + 2L|c_{1}|\|C_{-1}\|_2,~~C_{-1}:=\left[c_2,...,c_d\right]^\top.
\end{equation*}
\label{inexact_d}
\end{lemma}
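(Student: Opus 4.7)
The plan is to split the double sum defining $D$ according to whether each index equals $1$, recognize that every resulting block is at most a rank-one outer product in the basis $\{v_i\}$, and then apply the identity $\|xy^\top\|_2=\|x\|_2\|y\|_2$ to each piece. The hypothesis $v_i^\top v_j=\delta_{ij}$ and the uniform bound $|\lambda_j|\le L$ do all the real work once the block structure is laid out cleanly.

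First I would partition $S$ into the three disjoint sets $S_1=\{(1,j):2\le j\le d\}$, $S_2=\{(i,1):2\le i\le d\}$, and $S_3=\{(i,j):2\le i,j\le d\}$, and write $D=D_1+D_2+D_3$ accordingly. Introducing $V_{-1}=[v_2,\ldots,v_d]\in\mathbb{R}^{d\times(d-1)}$ and $\Lambda_{-1}=\mathrm{diag}(\lambda_2,\ldots,\lambda_d)$, the three sums rearrange as
\begin{align*}
D_1 &= c_1\,v_1\bigl(V_{-1}\Lambda_{-1}C_{-1}\bigr)^{\!\top},\\
D_2 &= \lambda_1 c_1\,\bigl(V_{-1}C_{-1}\bigr)\,v_1^{\top},\\
D_3 &= \bigl(V_{-1}C_{-1}\bigr)\bigl(V_{-1}\Lambda_{-1}C_{-1}\bigr)^{\!\top}.
\end{align*}
The essential observation is the factorization of $D_3$: the double sum $\sum_{i,j\ge 2}\lambda_j c_i c_j v_i v_j^\top$ splits cleanly as an outer product of two $d$-vectors, so it is rank one rather than a generic rank-$(d-1)$ matrix.

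From here, orthonormality of the columns of $V_{-1}$ gives $\|V_{-1}C_{-1}\|_2=\|C_{-1}\|_2$, and $|\lambda_j|\le L$ gives $\|V_{-1}\Lambda_{-1}C_{-1}\|_2\le L\|C_{-1}\|_2$. Using $\|xy^\top\|_2=\|x\|_2\|y\|_2$ I would then deduce $\|D_1\|_2\le L|c_1|\|C_{-1}\|_2$, $\|D_2\|_2\le L|c_1|\|C_{-1}\|_2$ (from $|\lambda_1|\le L$), and $\|D_3\|_2\le L\|C_{-1}\|_2^2$. The triangle inequality $\|D\|_2\le \|D_1\|_2+\|D_2\|_2+\|D_3\|_2$ then delivers the stated bound.

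There is no genuine obstacle in this lemma; the only pitfall is estimating $D_3$ entrywise, which would introduce an $\ell^1$ norm on $C_{-1}$ and a spurious factor of $d$. Once the rank-one structure afforded by orthonormality of $\{v_i\}$ is exploited, the remaining argument is purely algebraic rearrangement.
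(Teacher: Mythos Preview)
Your proposal is correct and follows essentially the same approach as the paper's proof: the same three-way split of $S$, the same matrix notation $V_{-1}$, $\Lambda_{-1}$, $C_{-1}$, and the same rank-one outer-product observation for each block. The only cosmetic difference is that you phrase the bound on each block via $\|xy^\top\|_2=\|x\|_2\|y\|_2$, whereas the paper writes the blocks as $V_{-1}C_{-1}C_{-1}^\top\Lambda_{-1}V_{-1}^\top$, etc., and applies submultiplicativity; the two are equivalent.
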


\begin{proof}
We decompose $D$ as 
\begin{equation}
\begin{aligned}
     D &= \sum_{i=2}^d\sum_{j=2}^d \lambda_jc_{j}c_{i} v_i {v_j}^\top+\sum_{j=2}^d\lambda_jc_{j}c_{1}v_1{v_j}^\top+\sum_{i=2}^d\lambda_1c_{1}c_{i}v_i{v_1}^\top,
\end{aligned}
\end{equation}
and express the right-hand side terms as
\begin{equation*}
    \begin{aligned}
        \sum_{i=2}^d\sum_{j=2}^d\lambda_j c_{j} c_{i} v_i {v_j}^\top & = \bigg(\sum_{i=2}^d c_{i}v_i \bigg)\bigg(\sum_{j=2}^d \lambda_jc_{j}v_j\bigg)^\top= V_{-1}C_{-1}{{}C_{-1}}^\top\Lambda_{-1}
        {{} V_{-1}}^\top,\\
     \sum_{j=2}^d\lambda_jc_{j}c_{1}v_1{v_j}^\top 
     & = c_{1}v_1\bigg(\sum_{j=2}^d \lambda_jc_{j}v_j \bigg)^\top=c_{1}v_1{{}C_{-1}}^\top\Lambda_{-1}{{} V_{-1}}^\top,
\\
         \sum_{i=2}^d\lambda_1c_{1}c_{i}v_i{v_1}^\top 
         &= \bigg(\sum_{i=2}^d c_{i}v_i\bigg)\lambda_1c_{1} {v_1}^\top = \lambda_1 c_{1}V_{-1}C_{-1}{v_1}^\top,
    \end{aligned}
\end{equation*}
where $\Lambda_{-1} = \text{diag} \{\lambda_2,...,\lambda_d\}$ is a diagonal matrix and $V_{-1} = [v_2,...,v_d]\in\mathbb{R}^{d\times(d-1)}$ is a column-orthogonal matrix. We base on these to bound $\|D\|_2$ as 
\begin{equation*}
    \begin{aligned}
        \|D\|_2 
        & \leq \|C_{-1}{{}C_{-1}}^\top\Lambda_{-1}\|_2 + L|c_1|\|C_{-1}v_1^\top\|_2 + |c_1| \|v_1C_{-1}^\top \Lambda_{-1}\|_2\\
        & \leq L\|C_{-1}\|_2^2 + 2L|c_1|\|C_{-1}\|_2
    \end{aligned}
\end{equation*}
where we used $\|V_{-1}\|_2=1$ and  $\|\Lambda_{-1}\|_2=\max_{2\leq i\leq d}|\lambda_i|\leq L$. Thus we complete the proof.
\end{proof}

\begin{theorem}
Under assumption \ref{original_asm} and (\ref{cos}), 
    if $\alpha$ is small such that
    \begin{equation}
        0<\alpha<\frac{1}{2} \quad\text{and}\quad 1-2\alpha > 2\kappa(\alpha+2\sqrt{\alpha}),
    \end{equation}
where $\ds\kappa = \frac{L}{\mu}$, and $
    r_n=\|x^{(n)}-x^*\|_2<\delta
$ and $\ds\beta_n=\frac{2}{L+(1-2\alpha)\mu}$ for some $n\geq 0$,
then the following estimate holds
\begin{equation*}
    r_{n+1}\leq (1-q(\alpha))r_{n} + c(\alpha)r_n^2
\end{equation*}
where $\eta=1-2\alpha-2\kappa(\alpha+2\sqrt{\alpha})>0$ and
\begin{equation*}
     q(\alpha) = \frac{2\eta}{\kappa+(1-2\alpha)}\in(0,1), \quad 
    c(\alpha) = \frac{M\mu^{-1}}{\kappa+(1-2\alpha)}>0.
\end{equation*}

\label{inexact_single_step}
\end{theorem}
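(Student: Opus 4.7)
The plan is to apply Lemma \ref{recursion} with $A^{(n)} = I - 2\appv{1}\appvt{1}$, yielding $x^{(n+1)} - x^* = (Q^{(n)} + B^{(n)})(x^{(n)} - x^*)$ and hence $r_{n+1} \leq (\|Q^{(n)}\|_2 + \|B^{(n)}\|_2)\,r_n$. Since $A^{(n)}$ is a reflection, $\|A^{(n)}\|_2 = 1$, and Lemma \ref{recursion} immediately gives $\|B^{(n)}\|_2 \leq \tfrac{1}{2}\beta_n M r_n = M r_n/(L + (1-2\alpha)\mu) = c(\alpha)\,r_n$, which produces the advertised $c(\alpha)\,r_n^2$ term. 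Everything then reduces to proving $\|Q^{(n)}\|_2 \leq 1 - q(\alpha)$.

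For this I would expand the approximate eigenvector in the orthonormal basis supplied by the spectral decomposition (\ref{eigendecom}) of $\nabla^2 E(x^{(n)})$: writing $\appv{1} = \sum_{i=1}^d c_i \vv{i}$, the hypothesis (\ref{cos}) forces $c_1^2 \geq 1-\alpha$ and $\|C_{-1}\|_2^2 = \sum_{i\geq 2} c_i^2 \leq \alpha$. Substituting the spectral decomposition into $A^{(n)}\nabla^2 E(x^{(n)})$ and isolating the $(1,1)$ contribution from all the rest produces the splitting $A^{(n)}\nabla^2 E(x^{(n)}) = N - 2D$, where
$$N = (1 - 2c_1^2)\lambda_1^{(n)}\vv{1}\vt{1} + \sum_{m=2}^d \lambda_m^{(n)}\vv{m}\vt{m}$$
is diagonal in the basis $\{\vv{i}\}$ and $D = \sum_{(i,j)\neq(1,1)} c_i c_j \lambda_j^{(n)}\vv{i}\vt{j}$ is precisely the object treated in Lemma \ref{inexact_d}.

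The crucial observation is that $N$ is symmetric with all eigenvalues in $[(1-2\alpha)\mu,\,L]$. Indeed, by Assumption \ref{original_asm}(ii), $\lambda_m^{(n)}\in[\mu,L]$ for $m\geq 2$, while the $\vv{1}$-eigenvalue $(1-2c_1^2)\lambda_1^{(n)}$ is the product of two negative factors whose magnitudes lie in $[1-2\alpha,1]$ and $[\mu,L]$; here the hypotheses $\alpha<1/2$ and $c_1^2 \geq 1-\alpha$ guarantee $2c_1^2-1 \geq 1-2\alpha > 0$. Applying Lemma \ref{Q} with $\beta_n = 2/(L+(1-2\alpha)\mu)$ then gives $\|I-\beta_n N\|_2 \leq (L-(1-2\alpha)\mu)/(L+(1-2\alpha)\mu)$. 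In parallel, Lemma \ref{inexact_d} together with $|c_1| \leq 1$ and $\|C_{-1}\|_2 \leq \sqrt{\alpha}$ gives $\|D\|_2 \leq L\alpha + 2L\sqrt{\alpha}$, so $\|2\beta_n D\|_2 \leq 4L(\alpha + 2\sqrt{\alpha})/(L+(1-2\alpha)\mu)$.

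Summing the two bounds, then dividing numerator and denominator by $\mu$ and substituting $\kappa = L/\mu$, I recover
$$\|Q^{(n)}\|_2 \leq 1 - \frac{2\bigl[(1-2\alpha) - 2\kappa(\alpha + 2\sqrt{\alpha})\bigr]}{\kappa + (1-2\alpha)} = 1 - q(\alpha),$$
and the hypothesis $1-2\alpha > 2\kappa(\alpha+2\sqrt\alpha)$ ensures $\eta>0$, hence $q(\alpha)>0$; that $q(\alpha)<1$ is immediate from $\kappa \geq 1$. The main conceptual obstacle is isolating the symmetric piece $N$: the product of the reflection $A^{(n)}$ with the symmetric Hessian is not itself symmetric, and one must identify exactly the part that remains diagonal in the basis $\{\vv{i}\}$ so that Lemma \ref{Q} becomes applicable, while the non-symmetric leakage is absorbed into $D$ as an $O(\sqrt{\alpha})$ perturbation controlled by Lemma \ref{inexact_d}.
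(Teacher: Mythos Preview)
Your proposal is correct and follows essentially the same approach as the paper: apply Lemma \ref{recursion}, expand $\appv{1}$ in the eigenbasis $\{\vv{i}\}$, and split $Q^{(n)}$ into a diagonal piece (your $I-\beta_n N$, the paper's $G^{(n)}$) bounded via Lemma \ref{Q} plus the off-diagonal leakage (your $2\beta_n D$, the paper's $D^{(n)}$) bounded via Lemma \ref{inexact_d}. Your arithmetic for the $D$-term, giving $\|2\beta_n D\|_2 \le 4L(\alpha+2\sqrt{\alpha})/(L+(1-2\alpha)\mu)$, is in fact the one that matches the stated $\eta=1-2\alpha-2\kappa(\alpha+2\sqrt{\alpha})$ exactly.
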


%\begin{remark}
%    Note that there does exist some $\alpha \in(0,\frac{1}{2})$ such that $1-2\alpha > 2\kappa(\alpha+2\sqrt{\alpha})$ holds. Denote 
%    \begin{equation*}
%        f(\alpha) = 1-2\alpha - 2\kappa(\alpha+2\sqrt{\alpha}).
%    \end{equation*}
%    Direct calculation shows $f(0)=1$ , $f(\frac{1}{2})<0$ and $f^\prime(\alpha) = -2-2\kappa -\frac{2}{\alpha}<0$ for $\alpha>0$. It implies that $f$ has a unique positive zero point $\alpha^c\in(0,\frac{1}{2})$, and 
%    \begin{equation*}
%        f(\alpha)>0, \quad \alpha\in(0,\alpha^c).
%    \end{equation*}
%    Pay attention to that $\eta=f(\alpha)>0$ when $\alpha\in(0,\alpha^c)$.
%    \label{remark_inexact_1}
%\end{remark}
\begin{proof}
Based on (\ref{inexact_formulation}), let $A^{(n)} = I-2\appv{1}\appvt{1}$ and we apply Lemma \ref{recursion} to get
\begin{equation*}
    x^{(n+1)}-x^* = (Q^{(n)} + B^{(n)}) (x^{(n)}-x^*),~~\|B^{(n)}\|_2\leq\frac{1}{2}M\beta_n r_n.
\end{equation*}

%Here $A^{(n)}$ is an orthogonal matrix and hence $\|A^{(n)}\|_2=1$.

We remain to bound $\|Q^{(n)}\|_2$. $\hat{v}_1^{(n)}$ could be represented by the basis $\{v_i^{(n)}\}_{i=1}^d$
\begin{equation*}
    \hat{v}_1^{(n)} = c_1v_1^{(n)} + c_2v_2^{(n)} + \dots + c_dv_d^{(n)},~~c_i={v_i^{(n)}}^\top\hat{v}_1^{(n)},~~\sum_{i=1}^dc_i^2=1.
\end{equation*}

Then we obtain
\begin{equation*}
    \begin{aligned}
        2\appv{1}\appvt{1}\nabla^2E(x^{(n)}) & = 
        2\bigg(\sum_{i=1}^d c_i v_i^{(n)} \bigg) \bigg(\sum_{i=1}^d c_i v_i^{(n)} \bigg)^\top \bigg(\sum_{j=1}^d \lambda_j^{(n)} v_j^{(n)}{v_j^{(n)}}^\top\bigg)\\
        & =  2\bigg(\sum_{i=1}^d c_i v_i^{(n)} \bigg) \bigg(\sum_{j=1}^d \lambda_j^{(n)}c_j {v_j^{(n)}}^\top\bigg)\\
        & = 2\sum_{1\leq i,j\leq d} \lambda_j^{(n)}c_jc_i v_i^{(n)} {v_j^{(n)}}^\top,
    \end{aligned}
\end{equation*}
and consequently
\begin{equation}
    \begin{aligned}
        Q^{(n)} & = I - \beta_n \bigg(\sum_{l=1}^d \lambda_l^{(n)} v_l^{(n)}{v_l^{(n)}}^\top - 2\sum_{1\leq i,j\leq d} \lambda_j^{(n)}c_jc_i v_i^{(n)} {v_j^{(n)}}^\top \bigg)\\ 
        & = \bigg[I - \beta_n \bigg(\lambda_1^{(n)}(1-2c_1^2)v_1^{(n)}{v_1^{(n)}}^\top +  \sum_{l=2}^d \lambda_l^{(n)} v_l^{(n)}{v_l^{(n)}}^\top \bigg)\bigg] \\
        & \quad + 2\beta_n\sum_{(i,j)\in S}\lambda_j^{(n)}c_jc_i v_i^{(n)} {v_j^{(n)}}^\top=:G^{(n)}+D^{(n)}.
    \end{aligned}
\end{equation}

Since $r_n<\delta$, $|\lambda_i|\leq L$ for $1\leq i\leq d$ and we apply Lemma \ref{inexact_d} to obtain
\begin{equation*}
    \|D^{(n)}\|_2\leq 2\beta_nL\|C_{-1}\|_2^2 + 2\beta_nL|c_1|\|C_{-1}\|_2.
\end{equation*}
By (\ref{cos}) we have
$
    1\geq c_1^2 = |\vt{1}\appv{1}|^2\geq 1-\alpha$ and consequently $\|C_{-1}\|^2_2=\sum_{i=2}^dc_i^2\leq \alpha$, which implies
\begin{equation}
    \|D^{(n)}\|_2\leq 2\beta_nL\alpha + 2\beta_nL\sqrt{\alpha}.
    \label{inexact_1_d}
\end{equation}
To bound $G^{(n)}$, we first note that $-\lambda_1^{(n)},\lambda_2^{(n)},...,\lambda_d^{(n)}$ belong to $[\mu,L]$, $1-2\alpha>0$ and $1-2c_1^2<0$ since $x^{(n)}\in U(x^*,\delta)$ and $c_1^2\geq1-\alpha>\frac{1}{2}$. Hence $\lambda_1^{(n)}(1-2c_1^2),\lambda_2^{(n)},...,\lambda_d^{(n)}$ belong to $[(1-2\alpha)\mu, L]$. By Lemma \ref{Q} we get
\begin{equation}
    \|G^{(n)}\|_2 \leq \frac{L-(1-2\alpha)\mu}{L+(1-2\alpha)\mu},
    \label{inexact_1_g}
\end{equation}
which, together with (\ref{inexact_1_d}) and $\beta_n = \frac{2}{L+(1-2\alpha)\mu}$, yields
\begin{equation}
    \|Q^{(n)}\|_2\leq \|G^{(n)}\|_2 + \|D^{(n)}\|_2\leq 
    \frac{L-(1-2\alpha)\mu}{L+(1-2\alpha)\mu} +  \frac{4L(\alpha+\sqrt{\alpha})}{L+(1-2\alpha)\mu}.
\end{equation}
We invoke the bounds of $||Q^{(n)}||_2$ and $||B^{(n)}||_2$ to get
\begin{equation*}
\begin{aligned}
      r_{n+1}&\leq \|Q^{(n)}\|_2 r_n + \|B^{(n)}\|_2 r_n \\
            &\leq \frac{L-(1-2\alpha)\mu+4L(\alpha+\sqrt{\alpha})}{L+(1-2\alpha)\mu}r_n + \frac{Mr_n^2}{L+(1-2\alpha)\mu}.
\end{aligned}
\end{equation*}
Define $q=q(\alpha)$ and $c=c(\alpha)$ by
\begin{equation*}
    1-q(\alpha) = \frac{L-(1-2\alpha)\mu+4L(\alpha+\sqrt{\alpha})}{L+(1-2\alpha)\mu}, ~~c(\alpha)=\frac{M}{L+(1-2\alpha)\mu}>0.
\end{equation*}
By assumptions of this theorem we have
\begin{equation*}
  0<  q(\alpha)\leq \frac{2(1-2\alpha)}{\kappa+(1-2\alpha)} <1,
\end{equation*}
which completes the proof.
\end{proof}

\begin{theorem}
   Suppose the Assumption \ref{original_asm} and (\ref{cos}) hold, 
     $\alpha$ satisfies 
    \begin{equation}
        0<\alpha<\frac{1}{2} \quad\text{and}\quad 1-2\alpha > 2\kappa(\alpha+2\sqrt{\alpha}), 
        \label{alpha_1}
    \end{equation}
    the initial point $x^{(0)}$ satisfies
    \[r_0=\|x^{(0)}-x^*\|_2<\min\{\delta, \hat{r}\},\ \hat{r}=\frac{2\mu\eta}{M}, \]
    where $\eta= 1-2\alpha-2\kappa(\alpha+2\sqrt{\alpha})>0$ with $\ds\kappa=\frac{L}{\mu}$, and $\ds\beta_n =  \frac{2}{L+(1-2\alpha)\mu}$. Then $x^{(n)}$
    converges to $x^*$ as $n\rightarrow \infty$ with the estimate on the convergence rate
   \begin{equation}\label{mh2} r_n=\|x^{(n)}-x^*\|_2\leq \left(1-\frac{2\eta}{\kappa+1-2\alpha+2\eta}\right)^n\frac{\hat{r}r_0}{\hat{r}-r_0}.\end{equation}
\end{theorem}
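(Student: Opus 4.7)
The plan is to mirror the argument of Theorem \ref{thmorg} but built on the single-step estimate in Theorem \ref{inexact_single_step} instead of Theorem \ref{exact_single}, and then invoke Lemma \ref{useful_lemma} with the modified constants $q(\alpha)$ and $c(\alpha)$ from the approximate setting. The conditions (\ref{alpha_1}) are precisely what is needed to guarantee $\eta>0$ and $q(\alpha)\in(0,1)$, so Theorem \ref{inexact_single_step} is available whenever we know $r_n<\delta$.

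First I would observe the key identity
\[
\frac{q(\alpha)}{c(\alpha)} \;=\; \frac{2\eta}{\kappa+(1-2\alpha)}\cdot\frac{\kappa+(1-2\alpha)}{M\mu^{-1}} \;=\; \frac{2\mu\eta}{M}\;=\;\hat r,
\]
so the assumption $r_0<\hat r$ coincides exactly with the hypothesis $r_0<q(\alpha)/c(\alpha)$ of Lemma \ref{useful_lemma}. Combined with $r_0<\delta$, this lets me apply Theorem \ref{inexact_single_step} at $n=0$ and then part (a) of Lemma \ref{useful_lemma} to conclude $r_1<r_0<\min\{\delta,\hat r\}$. An induction on $n$ then shows that $r_n<\min\{\delta,\hat r\}$ for every $n\geq 0$, so the recursion
\[
r_{n+1}\leq (1-q(\alpha))r_n+c(\alpha)r_n^2
\]
is valid for all $n$.

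Next I would apply Lemma \ref{useful_lemma}(b) with $q=q(\alpha)$ and $c=c(\alpha)$ to obtain
\[
r_n\;\leq\;\Bigl(\frac{1}{1+q(\alpha)}\Bigr)^{n}\frac{q(\alpha)\,r_0}{q(\alpha)-c(\alpha)r_0}.
\]
The coefficient simplifies using $q(\alpha)/c(\alpha)=\hat r$:
\[
\frac{q(\alpha)\,r_0}{q(\alpha)-c(\alpha)r_0}\;=\;\frac{r_0}{1-r_0/\hat r}\;=\;\frac{\hat r\, r_0}{\hat r-r_0}.
\]
For the geometric factor, plug in $q(\alpha)=2\eta/(\kappa+(1-2\alpha))$ to get
\[
\frac{1}{1+q(\alpha)}\;=\;\frac{\kappa+(1-2\alpha)}{\kappa+(1-2\alpha)+2\eta}\;=\;1-\frac{2\eta}{\kappa+1-2\alpha+2\eta},
\]
which is exactly the rate appearing in (\ref{mh2}). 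Combining the two simplifications yields the claimed bound and, since that rate lies in $(0,1)$, also the convergence $x^{(n)}\to x^*$.

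The step that requires the most care is not any single calculation but rather verifying that the induction remains consistent: at each step one must ensure $r_n<\delta$ to reapply Theorem \ref{inexact_single_step}, and this is precisely what part (a) of Lemma \ref{useful_lemma} guarantees once $r_0<\hat r\leq q(\alpha)/c(\alpha)$. Apart from this, the argument is essentially the same templated computation as in Theorem \ref{thmorg}, just with $(q,c)$ replaced by $(q(\alpha),c(\alpha))$ and $\hat r=2\mu/M$ replaced by $\hat r=2\mu\eta/M$; I expect no genuine technical obstacle beyond the algebraic bookkeeping that turns $1/(1+q(\alpha))$ into the rate advertised in the statement.
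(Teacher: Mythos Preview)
Your proposal is correct and follows exactly the approach the paper indicates: the paper omits the proof, stating only that it ``could be performed in parallel with that for Theorem \ref{thmorg},'' which is precisely what you do by substituting Theorem \ref{inexact_single_step} for Theorem \ref{exact_single} and invoking Lemma \ref{useful_lemma} with the constants $q(\alpha),c(\alpha)$. Your identification $q(\alpha)/c(\alpha)=\hat r$ and the algebraic simplification of $1/(1+q(\alpha))$ to the displayed rate are both correct.
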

The proof of this theorem could be performed in parallel with that for Theorem \ref{thmorg} and is thus omitted. In particular, the decay rate in (\ref{mh2}) equals to that in Theorem \ref{thmorg} if $\alpha=0$ (i.e., the approximate eigenvector $\hat v_1^{(n)}$ equals to the exact eigenvector $v_1^{(n)}$), which demonstrates the consistency of the results.

\section{Convergence rates of index-$k$ saddle dynamics}
In this section, we analyze the convergence rates of the iterations of index-$k$ saddle dynamics. Similar to Section \ref{sec4}, both the exact orthonormal eigenvectors $\{\vv{i}\}_{i=1}^k$ of the first $k$ smallest eigenvalues of $\nabla^2 E(x^{(n)})$ and their approximations $\{\appv{i}\}_{i=1}^k$ computed via the schemes in Section \ref{sec22} will be applied in each iteration. Compared with the simplest case $k=1$ studied in Section \ref{sec4}, which has only one direction vector in the system, more complicated analysis is required in this section due to the involvement of $k$ direction vectors.

\subsection{The case of exact eigenvectors}
In this case, the iteration scheme for the position variable reads
\begin{equation}
    x^{(n+1)} =  x^{(n)}-\beta_n\bigg(I-2\sum_{i=1}^kv_i^{(n)}{v_i^{(n)}}^\top\bigg)\nabla E(x^{(n)}).
    \label{exact_position_k}
\end{equation}

\begin{theorem}
Under Assumption \ref{original_asm}, if 
$r_n:=\|x^{(n)}-x^*\|_2<\delta$ and $\ds\beta_n = \frac{2}{L+\mu}$ for some $n\geq 0$, the following estimate holds
\begin{equation*}
    r_{n+1}\leq \left(1-\frac{2\mu}{L+\mu}\right)r_n + \frac{Mr_n^2}{L+\mu}.
\end{equation*}
  \label{exact_k}
\end{theorem}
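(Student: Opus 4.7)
The plan is to mirror the proof of Theorem \ref{exact_single}, replacing the single projection $v_1^{(n)}{v_1^{(n)}}^\top$ by the rank-$k$ projection $\sum_{i=1}^k v_i^{(n)}{v_i^{(n)}}^\top$. Setting $A^{(n)} = I - 2\sum_{i=1}^k v_i^{(n)}{v_i^{(n)}}^\top$, I would first invoke Lemma \ref{recursion} to write $x^{(n+1)} - x^* = (Q^{(n)} + B^{(n)})(x^{(n)} - x^*)$ and then bound the two terms separately. The triangular inequality then gives $r_{n+1} \leq \|Q^{(n)}\|_2 r_n + \|B^{(n)}\|_2 r_n$, so it suffices to show $\|Q^{(n)}\|_2 \leq (L-\mu)/(L+\mu)$ and $\|B^{(n)}\|_2 \leq \tfrac12 M \beta_n r_n$, whereupon the stated recursion follows by substituting $\beta_n = 2/(L+\mu)$.

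The bound on $\|B^{(n)}\|_2$ is immediate from Lemma \ref{recursion} once I observe that $\|A^{(n)}\|_2 = 1$. This holds because $\{v_i^{(n)}\}_{i=1}^k$ are orthonormal, so $A^{(n)}$ acts as $-I$ on $\operatorname{span}\{v_1^{(n)},\dots,v_k^{(n)}\}$ and as $+I$ on its orthogonal complement, giving eigenvalues $\pm 1$.

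For $\|Q^{(n)}\|_2$, I would use the eigendecomposition (\ref{eigendecom}) together with the identities $A^{(n)} v_j^{(n)} = -v_j^{(n)}$ for $1 \leq j \leq k$ and $A^{(n)} v_j^{(n)} = v_j^{(n)}$ for $k+1 \leq j \leq d$ to obtain
\begin{equation*}
Q^{(n)} = I - \beta_n\left(-\sum_{j=1}^k \lambda_j^{(n)} v_j^{(n)}{v_j^{(n)}}^\top + \sum_{j=k+1}^d \lambda_j^{(n)} v_j^{(n)}{v_j^{(n)}}^\top\right).
\end{equation*}
Since $r_n < \delta$, Assumption \ref{original_asm}(ii) gives $-\lambda_j^{(n)} \in [\mu, L]$ for $1 \leq j \leq k$ and $\lambda_j^{(n)} \in [\mu, L]$ for $k+1 \leq j \leq d$, so $Q^{(n)}$ is in the form required by Lemma \ref{Q} with all coefficients in $[\mu, L]$. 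Applying Lemma \ref{Q} with $\beta_n = 2/(L+\mu)$ yields $\|Q^{(n)}\|_2 \leq (L-\mu)/(L+\mu)$.

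No genuine obstacle arises here: the extension from $k=1$ to general $k$ is clean because orthonormality of $\{v_i^{(n)}\}_{i=1}^k$ preserves both the reflection property of $A^{(n)}$ (hence $\|A^{(n)}\|_2 = 1$) and the diagonal structure of $A^{(n)}\nabla^2 E(x^{(n)})$ in the eigenbasis of the Hessian. The only substantive point to verify is the sign flipping on the first $k$ eigenvalues, which ensures that after negation all $d$ coefficients lie in the positive interval $[\mu, L]$ where Lemma \ref{Q} applies. Combining the two bounds gives the stated estimate.
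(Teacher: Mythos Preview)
Your proposal is correct and follows essentially the same approach as the paper: both set $A^{(n)} = I - 2\sum_{i=1}^k v_i^{(n)}{v_i^{(n)}}^\top$, apply Lemma \ref{recursion} (using $\|A^{(n)}\|_2=1$) to bound $B^{(n)}$, and use the eigendecomposition of $\nabla^2 E(x^{(n)})$ together with Lemma \ref{Q} to obtain $\|Q^{(n)}\|_2 \leq (L-\mu)/(L+\mu)$. Your write-up in fact supplies slightly more justification for $\|A^{(n)}\|_2=1$ than the paper does, but otherwise the arguments are the same.
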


\begin{proof}
By (\ref{exact_position_k}), we choose $A^{(n)} = I-2\sum_{i=1}^kv_i^{(n)}\vt{i}$ and apply Lemma \ref{recursion} to get 
\begin{equation*}
    x^{(n+1)}-x^* = \big(Q^{(n)} + B^{(n)}\big) (x^{(n)}-x^*),~~\|B^{(n)}\|_2\leq \frac{1}{2}M\beta_nr_n,
\end{equation*}
which further implies $
    r_{n+1}\leq \|Q^{(n)}\|_2r_n + \|B^{(n)}\|_2r_n$ and we remain to bound $Q^{(n)}$.
 By eigenvalue decomposition (\ref{eigendecom}), $Q^{(n)}$ is represented as 
\begin{equation*}
    \begin{aligned}
          Q^{(n)} &= 
          I-\beta_n\left(I-2\sum_{i=1}^k v_i^{(n)}\vt{i}\right)\left(\sum_{i=1}^d \lambda_i^{(n)}v_i^{(n)}\vt{i}\right)\\
          &=I-\beta_n\left(\sum_{i=k+1}^{d}\lambda_i^{(n)}v_i^{(n)}\vt{i}-\sum_{i=1}^k\lambda_i^{(n)}v_i^{(n)}\vt{i}\right).
    \end{aligned}
\end{equation*}
By $r_n<\delta$ and Assumption 2.1,
\begin{equation*}
    -\lambda_i\in[\mu,L],\ \lambda_j\in[\mu,L],\ 1\leq i\leq k,\ k+1\leq j\leq d,
\end{equation*}
Then an application of Lemma \ref{Q} yields 
\begin{equation}
    \|Q^{(n)}\|_2\leq \frac{L-\mu}{L+\mu}. 
    \label{inexact_1_q}
\end{equation}
We finally invoke estimates of $B^{(n)}$ and $Q^{(n)}$ to bound $r_{n+1}$ to end the proof.
\end{proof}

Based on this theorem, we follow the proof of Theorem \ref{thmorg} to prove the following convergence result.
\begin{theorem}\label{thmng}
    Under Assumption \ref{original_asm}, if the initial point $x^{(0)}$ satisfies
    \[r_0=\|x^{(0)}-x^*\|_2<\min\{\delta,\hat{r}\},\ \hat{r}=\frac{2\mu}{M} \] 
 and $\beta_n = \frac{2}{L+\mu} $ for any $n\geq 0$, $x^{(n)}$ converges to $x^*$ as $n\rightarrow \infty$ 
    with the estimate on the convergence rate
    \[r_n=\|x^{(n)}-x^*\|_2\leq \left(1-\frac{2}{\kappa+3}\right)^n\frac{\hat{r}r_0}{\hat{r}-r_0},~~\kappa=\frac{L}{\mu}.\]
\end{theorem}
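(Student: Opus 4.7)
The plan is to mirror the argument used for Theorem \ref{thmorg} in the index$-1$ case, feeding in the single-step bound from Theorem \ref{exact_k} and then applying both parts of Lemma \ref{useful_lemma} to upgrade the one-step contraction into a global geometric convergence estimate. The key observation is that Theorem \ref{exact_k} gives an estimate of exactly the same shape as in the index$-1$ case, namely
\begin{equation*}
r_{n+1}\leq (1-q) r_n + c\, r_n^2, \quad q=\frac{2\mu}{L+\mu}\in(0,1),\ c=\frac{M}{L+\mu}>0,
\end{equation*}
whenever $r_n<\delta$ and $\beta_n=2/(L+\mu)$. A direct computation shows $q/c=2\mu/M=\hat r$, so the smallness condition $r_0<\hat r$ in the theorem is exactly the hypothesis $r_0<q/c$ needed to trigger Lemma \ref{useful_lemma}.

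First I would set up the induction. Because $r_0<\min\{\delta,\hat r\}$, the single-step bound from Theorem \ref{exact_k} applies at $n=0$, and Lemma \ref{useful_lemma}(a) then yields $r_1<r_0<\min\{\delta,\hat r\}$. In particular $r_1<\delta$, so Theorem \ref{exact_k} applies again and the same contraction carries through. Iterating this, I obtain $r_{n+1}<r_n<\min\{\delta,\hat r\}$ for every $n\geq 0$, so the single-step recursion holds for all indices. This induction is the only point that needs care: without it one cannot justify the use of Theorem \ref{exact_k} at step $n$, since that theorem requires $r_n<\delta$ as a standing hypothesis.

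Next I would invoke Lemma \ref{useful_lemma}(b) with the same $q$ and $c$, which immediately yields
\begin{equation*}
r_{n+1}\leq \left(\frac{1}{1+q}\right)^{n+1}\frac{q r_0}{q-c r_0}.
\end{equation*}
A short algebraic simplification turns this into the claimed form: using $\kappa=L/\mu$,
\begin{equation*}
\frac{1}{1+q}=\frac{L+\mu}{L+3\mu}=\frac{\kappa+1}{\kappa+3}=1-\frac{2}{\kappa+3},
\qquad \frac{q r_0}{q-c r_0}=\frac{\hat r r_0}{\hat r-r_0},
\end{equation*}
so the bound matches $r_n\leq (1-2/(\kappa+3))^n \hat r r_0/(\hat r-r_0)$ exactly, and convergence $x^{(n)}\to x^*$ follows.

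I do not anticipate a genuine obstacle here since the scalar recursion in Lemma \ref{useful_lemma} does all the heavy lifting and Theorem \ref{exact_k} already absorbs the multi-vector complications in the $k$-dimensional unstable subspace into a one-line bound identical to the $k=1$ case. The only subtlety worth emphasizing in the writeup is the inductive verification that $r_n$ stays inside $U(x^*,\delta)$, which is what lets us re-apply Theorem \ref{exact_k} at every step; everything else is a transcription of the proof of Theorem \ref{thmorg}. For that reason the authors' remark that the proof is omitted is entirely reasonable, and my own writeup would be at most a few lines.
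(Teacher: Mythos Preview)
Your proposal is correct and is precisely the argument the paper intends: it explicitly states that the proof of Theorem \ref{thmng} follows that of Theorem \ref{thmorg}, and your writeup carries this out verbatim by combining the single-step estimate of Theorem \ref{exact_k} with both parts of Lemma \ref{useful_lemma}. The algebraic identifications $q/c=\hat r$ and $1/(1+q)=1-2/(\kappa+3)$ are exactly right, and your emphasis on the induction ensuring $r_n<\delta$ at every step is the only point requiring comment.
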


\subsection{The case of approximate eigenvectors}
In this subsection, we consider the practical case that the eigenvectors are computed by the schemes in Section \ref{sec22}. To analyze this realistic case, a proper measure for the distance between the exact subspace spanned by the column vectors of $V_k^{(n)}=[v^{(n)}_1,...,v^{(n)}_k]$ and the approximate subspace spanned by those of $\hat{V}_k^{(n)}=[\hat{v}_1^{(n)},...,\hat{v}_k^{(n)}]$ is needed. In this work we measure this distance from the perspective of the projection and we suppose there exists an $0\leq \alpha\leq 1$ such that
\begin{equation}\label{cos2}
\big\|V_k^{(n)}{{}V_k^{(n)}}^\top - \hat{V}_k^{(n)}{{}\hat{V}_k^{(n)}}^\top\big\|_2\leq \alpha.
\end{equation}
   As $V^{(n)}=[\vv{1},...,\vv{d}]$ and $\hat{V}^{(n)}=[\appv{1},...,\appv{d}]$ are orthogonal matrices, we apply Lemma \ref{subspace} to obtain
    \begin{equation}\label{cos3}
        \|V_k^{(n)}{{}V_k^{(n)}}^\top - \hat{V}_k^{(n)}{{}\hat{V}_k^{(n)}}^\top\|_2 = \|{{}V_{-k}^{(n)}}^\top\hat{V}_k^{(n)}\|_2\leq \alpha
    \end{equation}
   where $V_{-k}^{(n)} = [v_{k+1}^{(n)},...,v_d^{(n)}]$, and the iteration scheme of the position variable reads
\begin{equation}
\begin{aligned}
        x^{(n+1)} &=  x^{(n)}-\beta_n\bigg(I-2\sum_{i=1}^k\appv{i}\appvt{i}\bigg)\nabla E(x^{(n)})\\
        & = x^{(n)}-\beta_n\big(I-2\hat{V}_k^{(n)}{{}\hat{V}_k^{(n)}}^\top\big)\nabla E(x^{(n)}).
\end{aligned}
    \label{inexact_position_k}
\end{equation}

\begin{lemma}
\label{inexact_aux}
Under the assumption (\ref{cos2}), $\hat{V}_k^{(n)}$ {can} be represented as 
\begin{equation*}
    \hat{V}_k^{(n)} = [V_k^{(n)}, V_{-k}^{(n)}]\begin{bmatrix}C^{(n)}_k\\C^{(n)}_{-k}
    \end{bmatrix},
\end{equation*}
where $C^{(n)}_k = {{}V_k^{(n)}}^\top\hat{V}_k^{(n)}$, $C^{(n)}_{-k} = {{}V_{-k}^{(n)}}^\top\hat{V}_k^{(n)}$ and the following estimates hold
\begin{equation*}
    (1-\alpha)I\preceq C^{(n)}_k{{}C^{(n)}_k}^\top \preceq I,\ \|C^{(n)}_{-k}\|_2\leq \alpha.
\end{equation*}
\end{lemma}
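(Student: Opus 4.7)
The plan is to derive the claimed block decomposition by expanding $\hat{V}_k^{(n)}$ in the orthonormal basis supplied by the full matrix $V^{(n)}=[V_k^{(n)},V_{-k}^{(n)}]$, and then to convert the projection-norm bound in (\ref{cos2})--(\ref{cos3}) into the asserted two-sided estimate on $C_k^{(n)}{C_k^{(n)}}^\top$ together with the norm bound on $C_{-k}^{(n)}$.

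For the decomposition, I would observe that $V^{(n)}\in\mathbb{R}^{d\times d}$ is an orthogonal matrix, so $V^{(n)}{V^{(n)}}^\top=I$. Left-multiplying $\hat{V}_k^{(n)}$ by this identity and partitioning $V^{(n)}$ column-wise yields
\[
\hat{V}_k^{(n)}=V^{(n)}{V^{(n)}}^\top\hat{V}_k^{(n)}=[V_k^{(n)},V_{-k}^{(n)}]\begin{bmatrix}{V_k^{(n)}}^\top\hat{V}_k^{(n)}\\ {V_{-k}^{(n)}}^\top\hat{V}_k^{(n)}\end{bmatrix}=[V_k^{(n)},V_{-k}^{(n)}]\begin{bmatrix}C_k^{(n)}\\ C_{-k}^{(n)}\end{bmatrix},
\]
which simultaneously identifies the blocks $C_k^{(n)}$ and $C_{-k}^{(n)}$ as stated. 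The estimate $\|C_{-k}^{(n)}\|_2\leq\alpha$ is then immediate from (\ref{cos3}), since $C_{-k}^{(n)}={V_{-k}^{(n)}}^\top\hat{V}_k^{(n)}$ by construction.

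For the two-sided bound on $C_k^{(n)}{C_k^{(n)}}^\top$, I would exploit the orthonormality of the columns of $\hat{V}_k^{(n)}$, i.e.\ ${\hat{V}_k^{(n)}}^\top\hat{V}_k^{(n)}=I_k$. Substituting the decomposition above and using ${V_k^{(n)}}^\top V_k^{(n)}=I_k$, ${V_{-k}^{(n)}}^\top V_{-k}^{(n)}=I_{d-k}$ and ${V_k^{(n)}}^\top V_{-k}^{(n)}=0$ reduces the product to the identity
\[
{C_k^{(n)}}^\top C_k^{(n)}+{C_{-k}^{(n)}}^\top C_{-k}^{(n)}=I_k.
\]
Since $0\preceq {C_{-k}^{(n)}}^\top C_{-k}^{(n)}\preceq \|C_{-k}^{(n)}\|_2^2\,I_k\preceq\alpha^2 I_k\preceq\alpha I_k$, rearranging yields $(1-\alpha)I_k\preceq {C_k^{(n)}}^\top C_k^{(n)}\preceq I_k$. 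Because $C_k^{(n)}$ is square, the singular value decomposition $C_k^{(n)}=U\Sigma W^\top$ shows that ${C_k^{(n)}}^\top C_k^{(n)}$ and $C_k^{(n)}{C_k^{(n)}}^\top$ share the spectrum $\{\sigma_i^2\}$, so the same sandwich bound transfers verbatim to $C_k^{(n)}{C_k^{(n)}}^\top$.

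No step presents a genuine obstacle; the argument is an orthogonality bookkeeping exercise. The only mildly subtle point is this last transfer from ${C_k^{(n)}}^\top C_k^{(n)}$ to $C_k^{(n)}{C_k^{(n)}}^\top$, which crucially uses that $C_k^{(n)}$ is square and is settled immediately by the SVD. The inequality $\alpha^2\leq\alpha$, valid for $\alpha\in[0,1]$, absorbs the slight slack between what the argument naturally produces and the cleaner statement $(1-\alpha)I\preceq C_k^{(n)}{C_k^{(n)}}^\top$ recorded in the lemma.
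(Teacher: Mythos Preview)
Your proof is correct. The decomposition step and the bound $\|C_{-k}^{(n)}\|_2\leq\alpha$ match the paper verbatim. For the two-sided bound on $C_k^{(n)}{C_k^{(n)}}^\top$, however, you take a different route from the paper. The paper reads the semidefinite inequality $V_k^{(n)}{V_k^{(n)}}^\top-\alpha I\preceq \hat{V}_k^{(n)}{\hat{V}_k^{(n)}}^\top$ directly off assumption~(\ref{cos2}) and conjugates by ${V_k^{(n)}}^\top(\cdot)V_k^{(n)}$ to obtain $(1-\alpha)I\preceq C_k^{(n)}{C_k^{(n)}}^\top$ in one line, with the upper bound coming from $\|C_k^{(n)}\|_2\leq 1$. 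You instead use the column-orthonormality of $\hat{V}_k^{(n)}$ to derive the identity ${C_k^{(n)}}^\top C_k^{(n)}+{C_{-k}^{(n)}}^\top C_{-k}^{(n)}=I_k$, bound the second summand by $\alpha^2 I_k$, and then transfer the resulting sandwich from ${C_k^{(n)}}^\top C_k^{(n)}$ to $C_k^{(n)}{C_k^{(n)}}^\top$ via the SVD, exploiting that $C_k^{(n)}$ is square. Your argument actually yields the sharper lower bound $(1-\alpha^2)I$ before you relax it, at the cost of the extra spectrum-transfer step; the paper's argument lands directly on $(1-\alpha)I$ without needing the SVD observation. Both are short and fully rigorous.
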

\begin{proof}
Direct calculations show that 
\begin{equation*}
    \begin{aligned}
    \begin{bmatrix}
    V_k^{(n)}, V_{-k}^{(n)}
    \end{bmatrix}
    \begin{bmatrix}C^{(n)}_k\\C^{(n)}_{-k}
    \end{bmatrix} 
    &= V_k^{(n)}C^{(n)}_k + V_{-k}^{(n)}C^{(n)}_{-k}\\
    &\hspace{-0.5in}= V_k^{(n)}{{}V_k^{(n)}}^\top\hat{V}_k^{(n)} + V_{-k}^{(n)}{{}V_{-k}^{(n)}}^\top\hat{V}_k^{(n)} = V^{(n)}{{}V^{(n)}}^\top \hat{V}_k^{(n)} = \hat{V}_k^{(n)},
    \end{aligned}
\end{equation*}
where we used $V^{(n)}=[V_k^{(n)}, V_{-k}^{(n)}]$. By (\ref{cos3}) we have $ ||C^{(n)}_{-k}||_2\leq \alpha$. Besides, (\ref{cos2}) implies
\begin{equation*}
    V_k^{(n)}{{}V_k^{(n)}}^\top-\alpha I\preceq \hat{V}_k^{(n)}{{}\hat{V}_k^{(n)}}^\top \preceq V_k^{(n)}{{}V_k^{(n)}}^\top+\alpha I,
\end{equation*}
which leads to $
    {{}V_k^{(n)}}^\top[V_k^{(n)}{{}V_k^{(n)}}^\top-\alpha I]V_k^{(n)} \preceq {{}V_k^{(n)}}^\top [\hat{V}_k^{(n)}{{}\hat{V}_k^{(n)}}^\top] V_k^{(n)},
$
i.e., 
$$
    (1-\alpha)I\preceq C^{(n)}_k{{}C^{(n)}_k}^\top.$$
By $\|\hat{V}^{(n)}_k\|_2 = \|V^{(n)}_k\|_2=1$ we have
\begin{equation*}
    \|C^{(n)}_k{{}C^{(n)}_k}^\top\|_2\leq 
    \|C^{(n)}_k\|_2^2\leq \|V^{(n)}_k\|_2^2 \|\hat{V}^{(n)}_k\|_2^2 =1.
\end{equation*}
 Thus we obtain $ C^{(n)}_k{{}C^{(n)}_k}^\top \preceq I$, which completes the proof.
\end{proof}

\begin{theorem}
\label{accuracy}
Suppose the Assumption \ref{original_asm} and (\ref{cos2}) hold, $\alpha$ satisfies 
\begin{equation}
    1-\alpha > \kappa\alpha(\alpha+5),~~\kappa=\frac{L}{\mu},
    \label{inexact_k_alpha}
\end{equation}
  $r_n:=\|x^{(n)}-x^*\|_2<\delta$ and $\ds\beta_n = \frac{2}{L(1-\alpha^2)+\mu(1-\alpha)}$. The following estimate holds
\begin{equation*}
    r_{n+1}\leq \left[1-q(\alpha)\right]r_n+c(\alpha)r_n^2,
\end{equation*}
where
\begin{equation*}
    q(\alpha) = \frac{2(1-\alpha)-2\kappa\alpha(\alpha+5)}{\kappa(1-\alpha^2)+(1-\alpha)}\in(0,1),\ c(\alpha) = \frac{M\mu^{-1}}{\kappa(1-\alpha^2)+(1-\alpha)}>0.
\end{equation*}
  \label{inexact_k}
\end{theorem}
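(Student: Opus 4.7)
The plan mirrors the template of Theorem \ref{inexact_single_step} (the inexact index-$1$ case), adapted to the subspace setting of the index-$k$ case. First, since $\hat V_k^{(n)}$ has orthonormal columns, $A^{(n)} := I - 2\hat V_k^{(n)}\hat V_k^{(n)\top}$ is an orthogonal reflection and satisfies $\|A^{(n)}\|_2 = 1$. Applying Lemma \ref{recursion} to (\ref{inexact_position_k}) then yields $x^{(n+1)}-x^* = (Q^{(n)}+B^{(n)})(x^{(n)}-x^*)$ with $\|B^{(n)}\|_2 \le \tfrac{1}{2}\beta_n M r_n$, so by the triangle inequality the whole task reduces to bounding $\|Q^{(n)}\|_2$, where $Q^{(n)} := I - \beta_n A^{(n)}\nabla^2 E(x^{(n)})$.

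I would then decompose $Q^{(n)} = G^{(n)} + D^{(n)}$ using Lemma \ref{inexact_aux}. Substituting $\hat V_k^{(n)} = V_k^{(n)} C_k^{(n)} + V_{-k}^{(n)} C_{-k}^{(n)}$ together with $\nabla^2 E(x^{(n)}) = V_k^{(n)}\Lambda_k^{(n)} V_k^{(n)\top} + V_{-k}^{(n)}\Lambda_{-k}^{(n)} V_{-k}^{(n)\top}$ into $\hat V_k^{(n)}\hat V_k^{(n)\top}\nabla^2 E(x^{(n)})$ produces four block terms indexed by $\{V_k^{(n)},V_{-k}^{(n)}\}\times\{V_k^{(n)},V_{-k}^{(n)}\}$. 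The \emph{clean} part $G^{(n)}$ collects the two diagonal blocks after absorbing $C_k^{(n)} C_k^{(n)\top}$ into the negative-eigenvalue piece, in direct analogy with the scalar factor $(1-2c_1^2)$ in the proof of Theorem \ref{inexact_single_step}, while $D^{(n)}$ collects the off-diagonal cross blocks together with all remaining $C_{-k}^{(n)}$-dependent corrections.

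The main obstacle is to derive the advertised bound on $\|G^{(n)}\|_2$. Unlike the index-$1$ case, $C_k^{(n)} C_k^{(n)\top}$ is a genuine $k\times k$ matrix that in general fails to commute with $\Lambda_k^{(n)}$, so the negative-eigenvalue block of $G^{(n)}$ is not symmetric and Lemma \ref{Q} is not directly applicable. The plan is to exploit $(1-\alpha)I\preceq C_k^{(n)} C_k^{(n)\top}\preceq I$ from Lemma \ref{inexact_aux} together with the block structure in the orthogonal decomposition $\mathbb{R}^d = \mathrm{range}(V_k^{(n)})\oplus\mathrm{range}(V_{-k}^{(n)})$ to reduce the problem to the situation in which the effective spectrum lies in $[(1-\alpha)\mu,\,(1-\alpha^2)L]$, and then to invoke Lemma \ref{Q} with the stated $\beta_n$ to conclude
\[
\|G^{(n)}\|_2 \le \frac{L(1-\alpha^2)-\mu(1-\alpha)}{L(1-\alpha^2)+\mu(1-\alpha)}.
\]
The complementary estimate $\|D^{(n)}\|_2 \le L\alpha(\alpha+5)\beta_n$ is then obtained by bounding each residual term through the triangle inequality, using $\|C_{-k}^{(n)}\|_2\le\alpha$, $\|C_k^{(n)}\|_2\le 1$ and $\|\Lambda_k^{(n)}\|_2,\|\Lambda_{-k}^{(n)}\|_2 \le L$. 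Combining $\|Q^{(n)}\|_2 \le \|G^{(n)}\|_2 + \|D^{(n)}\|_2$ with the $\|B^{(n)}\|_2$ bound and matching terms to the claimed $q(\alpha)$ and $c(\alpha)$ finishes the argument; the standing hypothesis (\ref{inexact_k_alpha}) is precisely what forces $q(\alpha) > 0$, and $q(\alpha)<1$ is then an immediate algebraic consequence of $1-\alpha < \kappa(1-\alpha^2)+(1-\alpha)$.
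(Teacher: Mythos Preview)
Your overall architecture matches the paper: apply Lemma~\ref{recursion}, split $Q^{(n)}$ into a ``clean'' part plus a residual, bound the clean part via Lemma~\ref{Q}, and bound the residual through $\|C_{-k}^{(n)}\|_2\le\alpha$. The gap is in the specific split you describe. If $G^{(n)}$ really retains the full matrix $C_k^{(n)}{C_k^{(n)}}^\top$ in the negative block while the positive block of $G^{(n)}$ is the bare $V_{-k}^{(n)}\Lambda_{-k}^{(n)}{V_{-k}^{(n)}}^\top$, then the claimed bound on $\|G^{(n)}\|_2$ is false: with $\beta_n=\tfrac{2}{(1-\alpha^2)L+(1-\alpha)\mu}$ the positive block alone gives
\[
\bigl|1-\beta_nL\bigr|=\frac{(1+\alpha^2)L-(1-\alpha)\mu}{(1-\alpha^2)L+(1-\alpha)\mu}
>\frac{(1-\alpha^2)L-(1-\alpha)\mu}{(1-\alpha^2)L+(1-\alpha)\mu},
\]
which already exceeds your target. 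And on the negative block, $(I-2C_k^{(n)}{C_k^{(n)}}^\top)\Lambda_k^{(n)}$ is genuinely non-symmetric, so ``effective spectrum'' arguments control eigenvalues but not the operator norm that Lemma~\ref{Q} needs; you flag this obstacle but do not actually remove it.

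The paper's device is to force the clean part to have \emph{scalar} coefficients by centering:
\[
C_k^{(n)}{C_k^{(n)}}^\top=\Bigl(1-\tfrac{\alpha}{2}\Bigr)I+\Bigl[C_k^{(n)}{C_k^{(n)}}^\top-\bigl(1-\tfrac{\alpha}{2}\bigr)I\Bigr],\qquad
C_{-k}^{(n)}{C_{-k}^{(n)}}^\top=\tfrac{\alpha^2}{2}I+\Bigl[C_{-k}^{(n)}{C_{-k}^{(n)}}^\top-\tfrac{\alpha^2}{2}I\Bigr].
\]
With these substitutions the clean part becomes $K^{(n)}=(\alpha-1)V_k^{(n)}\Lambda_k^{(n)}{V_k^{(n)}}^\top+(1-\alpha^2)V_{-k}^{(n)}\Lambda_{-k}^{(n)}{V_{-k}^{(n)}}^\top$, which is symmetric with spectrum in $[(1-\alpha)\mu,(1-\alpha^2)L]$, so Lemma~\ref{Q} applies directly and the non-commutation issue disappears. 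The two bracketed deviations, bounded in norm by $\alpha/2$ and $\alpha^2/2$ via Lemma~\ref{inexact_aux}, are pushed into the residual $R^{(n)}$; together with the two cross terms this yields exactly the $L(\alpha^2+5\alpha)$ bound. Your target inequalities are correct, but the decomposition you wrote down does not produce them; the scalar centering is the missing idea.
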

\begin{proof}
Based on the formulation (\ref{inexact_position_k}), we choose $A^{(n)} = I-2\hat{V}^{(n)}_k{{}\hat{V}^{(n)}_k}^\top$ and apply Lemma \ref{recursion} to get 
\begin{equation*}
    x^{(n+1)}-x^* = (Q^{(n)} + B^{(n)}) (x^{(n)}-x^*),~~\|B^{(n)}\|_2\leq \frac{1}{2}M\beta_nr_n,
\end{equation*}
which leads to
\begin{equation}
\begin{aligned}
      r_{n+1}&\leq \|Q^{(n)}\|_2r_n + \|B^{(n)}\|_2r_n\leq\|Q^{(n)}\|_2r_n + \frac{1}{2}M\beta_nr_n^2.
    \label{relation}
\end{aligned}
\end{equation}
To bound $\|Q^{(n)}\|_2$, we reformulate the eigenvalue decomposition (\ref{eigendecom}) as
\begin{equation*}
    \nabla^2 E(x^{(n)}) =  V_k^{(n)}\Lambda_k{{}V_k^{(n)}}^\top + V_{-k}^{(n)}\Lambda_{-k}{{}V_{-k}^{(n)}}^\top,
\end{equation*}
where $\Lambda_k^{(n)}=\text{diag}\{\lambda_1^{(n)},...,\lambda^{(n)}_k\}$ and $\Lambda_{-k}^{(n)}=\text{diag}\{\lambda_{k+1}^{(n)},...,\lambda^{(n)}_d\}$. Then by this and Lemma \ref{inexact_aux}  
\begin{equation}
    \begin{aligned}
    \hat{V}_k^{(n)}{{}{\hat{V}_k^{(n)}}}^\top\nabla^2E(x^{(n)}) = V^{(n)}_kC_k^{(n)}{{}C_k^{(n)}}^\top \Lambda_k^{(n)}{{}V^{(n)}_k}^\top + V^{(n)}_{-k}C_{-k}^{(n)}{{}C_{-k}^{(n)}}^\top \Lambda_{-k}^{(n)}{{}V^{(n)}_{-k}}^\top\\
    \ +V^{(n)}_kC_k^{(n)}{{}C_{-k}^{(n)}}^\top \Lambda_{-k}^{(n)}{{}V^{(n)}_{-k}}^\top+
    V^{(n)}_{-k}C_{-k}^{(n)}{{}C_k^{(n)}}^\top \Lambda_k^{(n)}{{}V^{(n)}_k}^\top.
    \end{aligned}
    \label{inexact_aux_1}
\end{equation}
where we used $\hat{V}_k^{(n)} = V_k^{(n)}C_k^{(n)} + V_{-k}^{(n)}C_{-k}^{(n)}$. Based on (\ref{inexact_aux_1}) we have 
\begin{equation*}
\begin{aligned}
 &\big( I-2\hat{V}_k^{(n)}{{}{\hat{V}_k^{(n)}}}^\top \big) \nabla^2E(x^{(n)})\\
  &\qquad =V^{(n)}_k\left(I-2C_k^{(n)}{{}C_k^{(n)}}^\top\right) \Lambda_k^{(n)}{{}V^{(n)}_k}^\top+ V^{(n)}_{-k}\left(I-2C_{-k}^{(n)}{{}C_{-k}^{(n)}}^\top\right) \Lambda_{-k}^{(n)}{{}V^{(n)}_{-k}}^\top\\
 &\qquad-2V^{(n)}_kC_k^{(n)}{{}C_{-k}^{(n)}}^\top \Lambda_{-k}^{(n)}{{}V^{(n)}_{-k}}^\top-2V^{(n)}_{-k}C_{-k}^{(n)}{{}C_k^{(n)}}^\top \Lambda_k^{(n)}{{}V^{(n)}_k}^\top.
\end{aligned}
\end{equation*}
%By Lemma \ref{inexact_aux} 
%\begin{equation}
%    (1-\alpha)I\preceq C_k^{(n)}{{}C_k^{(n)}}^\top \preceq I,\ O\preceq C_{-k}^{(n)}{{}C_{-k}^{(n)}}^\top\preceq \alpha^2I.
%    \label{inexact_c}
%\end{equation}
If we introduce the following splittings 
$$\begin{array}{c}
\ds C_k^{(n)}{{}C_k^{(n)}}^\top=\bigg(1-\frac{\alpha}{2}\bigg)I+\bigg[C_k^{(n)}{{}C_k^{(n)}}^\top-\bigg(1-\frac{\alpha}{2}\bigg)I\bigg],\\[0.15in]
\ds C_{-k}^{(n)}{{}C_{-k}^{(n)}}^\top=\frac{\alpha^2}{2}I+\bigg(C_{-k}^{(n)}{{}C_{-k}^{(n)}}^\top-\frac{\alpha^2}{2}I\bigg),
\end{array} 
$$
then we could reformulate $( I-2\hat{V}_k^{(n)}{{}{\hat{V}_k^{(n)}}}^\top ) \nabla^2E(x^{(n)})$ as 
\begin{equation*}
  ( I-2\hat{V}_k^{(n)}{{}{\hat{V}_k^{(n)}}}^\top ) \nabla^2E(x^{(n)}) = K^{(n)} + R^{(n)},
\end{equation*}
where
\begin{equation*}
    \begin{aligned}
    K^{(n)} &= \left[1-2(1-\frac{\alpha}{2})\right]V^{(n)}_k\Lambda_k^{(n)}{{}V^{(n)}_k}^\top + (1-2\times \frac{\alpha^2}{2})V^{(n)}_{-k}\Lambda_{-k}^{(n)}{{}V^{(n)}_{-k}}^\top\\
    &=\left(\alpha-1\right)V^{(n)}_k\Lambda_k^{(n)}{{}V^{(n)}_k}^\top + (1-\alpha^2)V^{(n)}_{-k}\Lambda_{-k}^{(n)}{{}V^{(n)}_{-k}}^\top,
    \end{aligned}
\end{equation*}
and
\begin{equation*}
\begin{aligned}
R^{(n)} & = -2V^{(n)}_kC_k^{(n)}{{}C_{-k}^{(n)}}^\top \Lambda_{-k}^{(n)}{{}V^{(n)}_{-k}}^\top-2V^{(n)}_{-k}C_{-k}^{(n)}{{}C_k^{(n)}}^\top \Lambda_k^{(n)}{{}V^{(n)}_k}^\top\\ 
 &\qquad+ 2V^{(n)}_k\left[(1-\frac{\alpha}{2})I-C_k^{(n)}{{}C_k^{(n)}}^\top\right] \Lambda_k^{(n)}{{}V^{(n)}_k}^\top\\
 &\qquad+ 2V^{(n)}_{-k}\left[\frac{\alpha^2}{2}I-C_{-k}^{(n)}{{}C_{-k}^{(n)}}^\top\right] \Lambda_{-k}^{(n)}{{}V^{(n)}_{-k}}^\top\\
\end{aligned}
\end{equation*}
Thus we obtain
\begin{equation*}
    \|Q^{(n)}\|_2\leq \|I-\beta_nK^{(n)}\|_2 + \beta_n\|R^{(n)}\|_2.
\end{equation*}
It is clear that $z_i^{(n)}:=(\alpha-1)\lambda_i^{(n)}$ for $1\leq i\leq k$ and $z_j^{(n)}:=(1-\alpha^2)\lambda_j^{(n)}$ for $k+1\leq j\leq d$ are eigenvalues of $K^{(n)}$, which is a symmetric positive definite matrix by the assumptions on $\alpha$ and $x^{(n)}\in U(x^*,\delta)$. Then 
\begin{equation*}
    (1-\alpha)\mu \leq z_i \leq (1-\alpha^2)L,\ 1\leq i\leq d.
\end{equation*}
Apply Lemma \ref{Q} to get
\begin{equation}
\|I-\beta_nK\|_2\leq \frac{(1-\alpha^2)L-(1-\alpha)\mu}{(1-\alpha^2)L+(1-\alpha)\mu}\ \text{under}\ \beta_n=\frac{2}{(1-\alpha^2)L+(1-\alpha)\mu}.
\label{inexact_part_1}
\end{equation}
On the other hand we apply $\|\Lambda_{k}^{(n)}\|_2,\|\Lambda_{-k}^{(n)}\|_2\leq L$  to bound $R^{(n)}$ by
\begin{equation*}
    \begin{aligned}
    \|R^{(n)}\|_2&\leq 2\| C_k^{(n)}{{}C_{-k}^{(n)}}^\top \Lambda_{-k}^{(n)}\|_2 + 2\|C_{-k}^{(n)}{{}C_k^{(n)}}^\top \Lambda_k^{(n)}\|_2 \\
    &~+ 2\|\left[(1-\frac{\alpha}{2})I-C_k^{(n)}{{}C_k^{(n)}}^\top\right] \Lambda_k^{(n)}\|_2+ 2\|\left[\frac{\alpha^2}{2}I-C_{-k}^{(n)}{{}C_{-k}^{(n)}}^\top\right] \Lambda_{-k}^{(n)}\|_2\\
    &\leq 4L||C_k^{(n)}{{}C_{-k}^{(n)}}^\top||_2 + 2L\|(1-\frac{\alpha}{2})I-C_k^{(n)}{{}C_k^{(n)}}^\top\|_2 \\
    &+ 2L\|\frac{\alpha^2}{2}I-C_{-k}^{(n)}{{}C_{-k}^{(n)}}^\top\|_2.
    \end{aligned}
\end{equation*}
By Lemma \ref{inexact_aux}, $\|C_k^{(n)}\|_2^2 = \|C_k^{(n)}{{}C_k^{(n)}}^\top\|_2\leq 1$, $\|C^{(n)}_{-k}\|_2\leq \alpha$ and 
\begin{equation*}
\begin{array}{c}
\ds -\frac{\alpha}{2}I\preceq\left[ (1-\frac{\alpha}{2})I-C_k^{(n)}{{}C_k^{(n)}}^\top \right]\preceq \frac{\alpha}{2}I,\\[0.2in]
\ds    -\frac{\alpha^2}{2}I\preceq\left[ \frac{\alpha^2}{2}I-C_{-k}^{(n)}{{}C_{-k}^{(n)}}^\top \right]\preceq \frac{\alpha^2}{2}I.
\end{array}
\end{equation*}
Hence,
\begin{equation}
    \|R^{(n)}\|_2\leq 4L\alpha + L\alpha + L\alpha^2 = L(\alpha^2+5\alpha).
    \label{inexact_part_2}
\end{equation}
Combining (\ref{inexact_part_1}) and (\ref{inexact_part_2}) as well as the assumption $1-\alpha>\kappa\alpha(\alpha+5)$ we obtain
\begin{equation}
    \|Q^{(n)}\| _2\leq \frac{\kappa(1-\alpha^2)-(1-\alpha)+2\kappa\alpha(\alpha+5)}{\kappa(1-\alpha^2)+(1-\alpha)}<1,~~\kappa=\frac{L}{\mu}.
    \label{Q_inexact}
\end{equation}
We invoke this in (\ref{relation}) to complete the proof.
\end{proof}

Based on this theorem, we follow the proof of Theorem \ref{thmorg} to prove the following convergence result.
\begin{theorem}
Suppose the Assumption \ref{original_asm} and \ref{cos2} hold, $\alpha$ satisfies 
\begin{equation}
    1-\alpha > \kappa\alpha(\alpha+5),~~\kappa=\frac{L}{\mu},
\end{equation}
the initial point $x^{(0)}$ satisfies
    \[r_0=\|x^{(0)}-x^*\|_2<\min\{\delta, \hat{r}\},\ \hat{r}=\frac{2\mu\eta}{M}, \]
    where $\eta= 1-\alpha-\kappa\alpha(\alpha+5)>0$, and $\beta_n = \frac{2}{L(1-\alpha^2)+\mu(1-\alpha)}$. Then $x^{(n)}$
    converges to $x^*$ as $n\rightarrow \infty$ with the estimate on the convergence rate
    \[r_n=\|x^{(n)}-x^*\|_2\leq \left(1-\frac{2\eta}{\kappa(1-\alpha^2)+1-\alpha+2\eta}\right)^n\frac{\hat{r}r_0}{\hat{r}-r_0}.\] 
In particular, if $\alpha=0$, the convergence rate in the above equation equals to that in Theorem \ref{thmng}, which again demonstrates the consistency. 
\end{theorem}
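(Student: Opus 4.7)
The plan is to mirror the proof of Theorem \ref{thmorg} almost verbatim, replacing the index-1 single-step estimate by Theorem \ref{inexact_k} and replacing the constants $q,c$ by $q(\alpha),c(\alpha)$ from that theorem. The whole argument is a two-line application of Lemma \ref{useful_lemma} once the correct identification between the threshold $\hat{r}=2\mu\eta/M$ and the quantity $q(\alpha)/c(\alpha)$ appearing in Lemma \ref{useful_lemma} is made.

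First I would verify the identification $\hat{r}=q(\alpha)/c(\alpha)$. With $q(\alpha)=\frac{2\eta}{\kappa(1-\alpha^2)+(1-\alpha)}$ and $c(\alpha)=\frac{M\mu^{-1}}{\kappa(1-\alpha^2)+(1-\alpha)}$ as defined in Theorem \ref{inexact_k}, the ratio is $q(\alpha)/c(\alpha)=2\eta\mu/M=\hat{r}$. Next, the hypotheses $r_0<\min\{\delta,\hat{r}\}$ together with the choice $\beta_n=2/(L(1-\alpha^2)+\mu(1-\alpha))$ put us exactly in the setting of Theorem \ref{inexact_k} at $n=0$, which yields $r_1\leq (1-q(\alpha))r_0+c(\alpha)r_0^2$. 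Applying Lemma \ref{useful_lemma}(a) with this $q,c$ gives $r_1<r_0<\hat{r}\leq \delta$, so the hypotheses of Theorem \ref{inexact_k} are satisfied again at $n=1$. A straightforward induction then shows $r_n<\delta$ and the single-step recursion $r_{n+1}\leq(1-q(\alpha))r_n+c(\alpha)r_n^2$ holds for every $n\geq 0$. Lemma \ref{useful_lemma}(b) immediately converts this recursion into the geometric bound
\[
r_{n+1}\leq \left(\tfrac{1}{1+q(\alpha)}\right)^{n+1}\frac{q(\alpha)r_0}{q(\alpha)-c(\alpha)r_0}=\left(\tfrac{1}{1+q(\alpha)}\right)^{n+1}\frac{\hat{r}r_0}{\hat{r}-r_0},
\]
where in the last equality I divided numerator and denominator by $c(\alpha)$.

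Finally I would match the decay factor to the one in the theorem statement. Using $\frac{1}{1+q(\alpha)}=1-\frac{q(\alpha)}{1+q(\alpha)}$ and substituting the explicit form of $q(\alpha)$, one finds
\[
\frac{q(\alpha)}{1+q(\alpha)}=\frac{2\eta}{\kappa(1-\alpha^2)+(1-\alpha)+2\eta},
\]
which produces exactly the rate claimed in the theorem. The consistency check at $\alpha=0$ then reduces $q(0)/c(0)$ and the rate to those of Theorem \ref{thmng}, as noted.

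There is essentially no serious obstacle here: all the analytic work is absorbed into Theorem \ref{inexact_k} (the subspace decomposition via Lemma \ref{inexact_aux} and the splitting of $(I-2\hat V_k\hat V_k^\top)\nabla^2 E$ into $K^{(n)}+R^{(n)}$) and into the contraction Lemma \ref{useful_lemma}. The only place where one must be slightly careful is the inductive step verifying that $r_n$ stays inside $U(x^*,\delta)$ so that Theorem \ref{inexact_k} can be reapplied; this follows painlessly from Lemma \ref{useful_lemma}(a) combined with the assumption $\hat{r}\leq \delta$ (implicit in $r_0<\min\{\delta,\hat{r}\}$ and the monotone decay $r_{n+1}<r_n$).
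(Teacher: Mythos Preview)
Your proposal is correct and matches the paper's approach exactly: the paper simply states that one ``follow[s] the proof of Theorem \ref{thmorg}'' by feeding the single-step estimate of Theorem \ref{inexact_k} into Lemma \ref{useful_lemma}, which is precisely what you do. One cosmetic slip: you write ``$\hat{r}\leq\delta$'' as if it were assumed, but it need not be; the correct (and sufficient) statement is that $r_0<\min\{\delta,\hat r\}$ together with the monotone decay $r_{n+1}<r_n$ from Lemma \ref{useful_lemma}(a) keeps $r_n<\min\{\delta,\hat r\}$ for all $n$, which is exactly what you need and what your parenthetical already indicates.
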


\section{Numerical Experiments}
In this section, we measure the convergence rates in numerical examples to substantiate the theoretical results and to explore how the local curvature in the neighborhood of the target saddle point and the accuracy of computing eigenvectors affect the convergence rates as indicated in conclusions of previous theorems.\par 

{\bf Example 1: Impacts of local curvature.} In this example we investigate how the local curvature in the neighborhood of the target saddle point affects the convergence rates. We select the modified Powell singular function for computation, which is a four-dimensional function defined by
\begin{equation*}
    P_k(x) = P(x)  - \sum_{i=1}^k s_i\arctan^2(x_i-x^*_i) + \sum_{j=k+1}^4 s_j\arctan^2(x_j-x^*_j)
\end{equation*}
where $P(x)$ is the Powell singular function
\begin{equation*}
    P(x) = (x_1+10x_2)^2 + 5(x_3-x_4)^2 + (x_2-2x_3)^4 + 10(x_1-x_4)^4.
\end{equation*}
Here $x_i$ is the $i$th coordinate of the vector $x:=[x_1,\cdots,x_4]^\top$ and $x^*:=[0,0,0,0]^\top$. By choosing suitable coefficients $s=[s_1,s_2,s_3,s_4]^\top$, $x^*$ could be a saddle point of $P_k(x)$. We fix $k=3$ in our experiments and set $x^{(0)}=[-0.15,0.2,0,-0.2]^\top$ as the initial point. The local curvature varies by choosing different parameter vectors $s$ in the Table \ref{powell}, in which we fix $s_1, s_4$ and set $s_2=s_3$, and {\color{blue}$x^{*}$} becomes an index-2 saddle point in this case. As $s_2,s_3$ decrease, the condition number $\kappa^*$ of the Hessian $\nabla^2 P_3(x^*)$ increases, which implies that the parameter $\kappa$ in previous theorems increases and thus the convergence rate decreases. We set the step size as $\beta=0.009$ and apply simultaneous Rayleigh-quotient minimization method to calculate eigenvectors under all cases in Table \ref{powell}. We compare the convergence rates of the algorithm in Figure \ref{fig_powell_cond}.1, which exhibits that a smaller condition number implies a faster convergence rate. This observation coincides with our theoretical analysis. 
\begin{table}[h]
\caption{Coefficients $s$ and corresponding condition numbers $\kappa^*$ of the Hessian $\nabla^2 P_3(x^*)$.}
\centering
\begin{tabular}{c|ccccc}
\hline
Case & $s_1$ & $s_2$ & $s_3$ & $s_4$ & $\kappa^*$ \\ \hline
(1)   & 10    & 12    & 12    & 1     & 11.56    \\
(2)   & 10    & 6     & 6     & 1     & 26.35    \\
(3)   & 10    & 4     & 4     & 1     & 46.38    \\ \hline
\end{tabular}
\label{powell}
\end{table}

\begin{figure}[h]
    \centering
    \includegraphics[height=6cm,width=7cm]{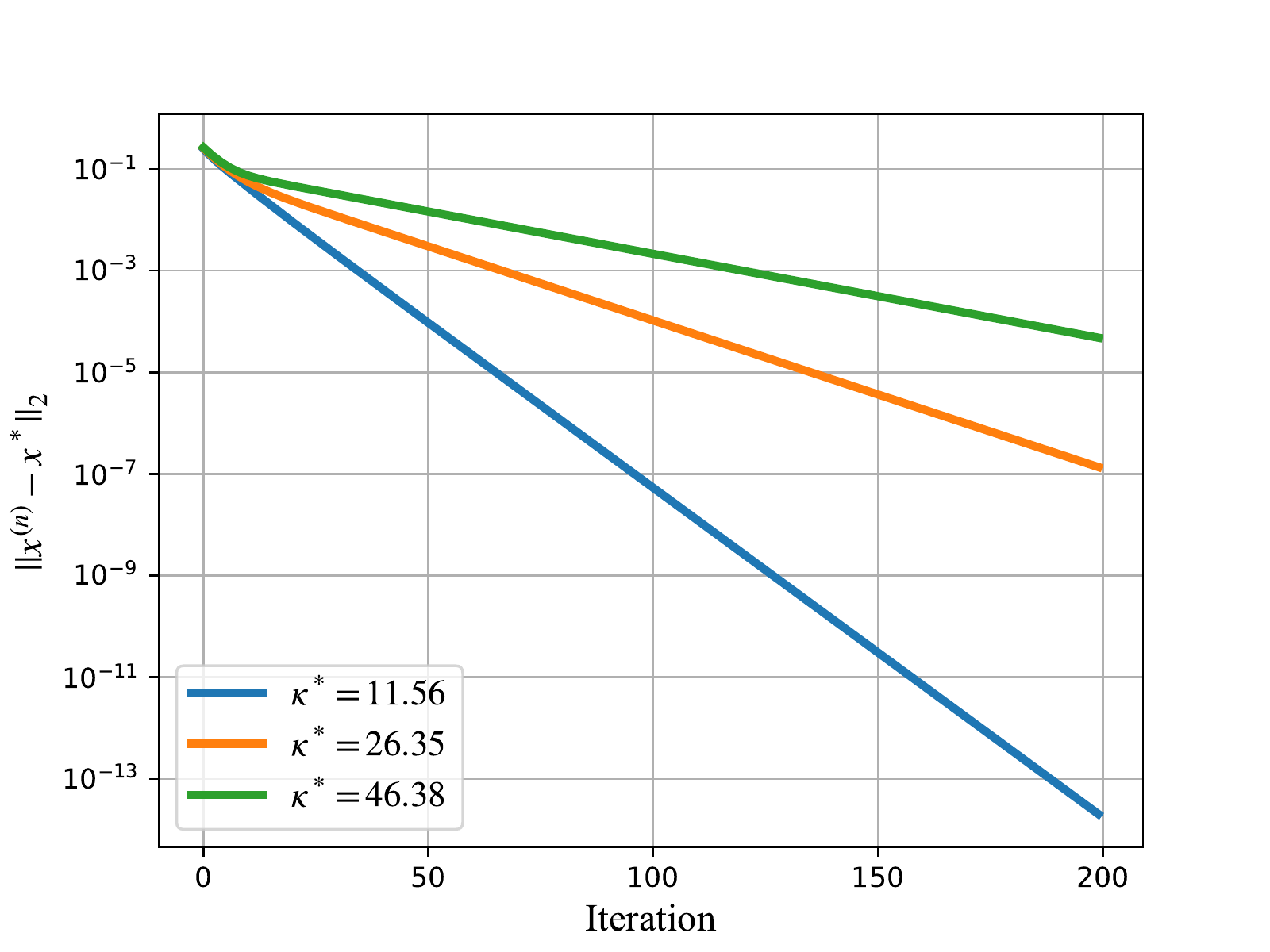}
    \label{fig_powell_cond}
    \caption{Plots of $||x^{(n)}-x^*||_2$ with respect to the iteration number under different $\kappa^*$.}
\end{figure}
\par

{\bf Example 2: Impacts of accuracy of eigenvectors.} In this case, we explore the impacts of the accuracy of the eigenvector computation on the convergence rates. Consider the following modified six-dimensional Biggs EXP6 function \cite{YinSCM}
\begin{equation*}
    B_k(x) = B(x) - \sum_{i=1}^k s_i\arctan^2(x_i-x^*_i) + \sum_{j=k+1}^6 s_j\arctan^2(x_j-x^*_j),
\end{equation*}
where $B(x)$ is the Biggs EXP6 function \cite{more1981testing}
\begin{equation*}
    B(x) = \sum_{i=1}^6 [ x_3\exp(t_ix_1) - x_4\exp(-t_ix_2) + x_6\exp(-t_ix_5) - y_i]^2.
\end{equation*}
Here $t_i = \frac{i}{10}$, $y_i = \exp(-t_i)-6\exp(-10t_i)+3\exp(-4t_i)$ and $x^* = [1,10,1,5,4,3]^\top$. Under suitable coefficients $s=[s_1,s_2,s_3,s_4,s_5,s_6]^\top$, $x^*$ becomes an index-$k$ saddle point of $B_k(x)$. In our experiments, we set $k=4$ , $s=[4,8,16,8,4,2]^\top$, the initial point $x^{(0)}=[0,9,1,5,4,3]^\top$ and the step size $\beta=10^{-4}$. In this case, $x^*$ is an index-4 saddle point of $B_k(x)$. \par 

 In numerical computations, we implement the simultaneous Rayleigh-quotient iteration minimization methods (SIRQIT) \cite{longsine1980simultaneous} and LOBPCG method \cite{knyazev2001toward} to compute eigenvectors. The sub-iteration number for  eigenvector computation is chosen as 1 or 5 for each method. In general, larger iteration number leads to more accurate computation results and LOBPCG is more accurate than SIRQIT in practice, which will accelerate the convergence according to the previous theorems. We compare both methods in Figure \ref{fig_biggs} by measuring the convergence rates, which shows that the SIRQIT with 1 sub-iteration corresponds to the slowest convergence rate since it is less accurate than other methods. By increasing the number of sub-iteration in SIRQIT, the convergence under SIRQIT becomes faster but is still a bit slower than the LOBPCG methods. The LOBPCG-based algorithm converges faster though the number of sub-iteration in the LOBPCG does not have salient impact on the convergence rate. These observations are consistent with our analysis in previous theorems.

\begin{figure}[h]
    \centering
    \includegraphics[height=6cm,width=7cm]{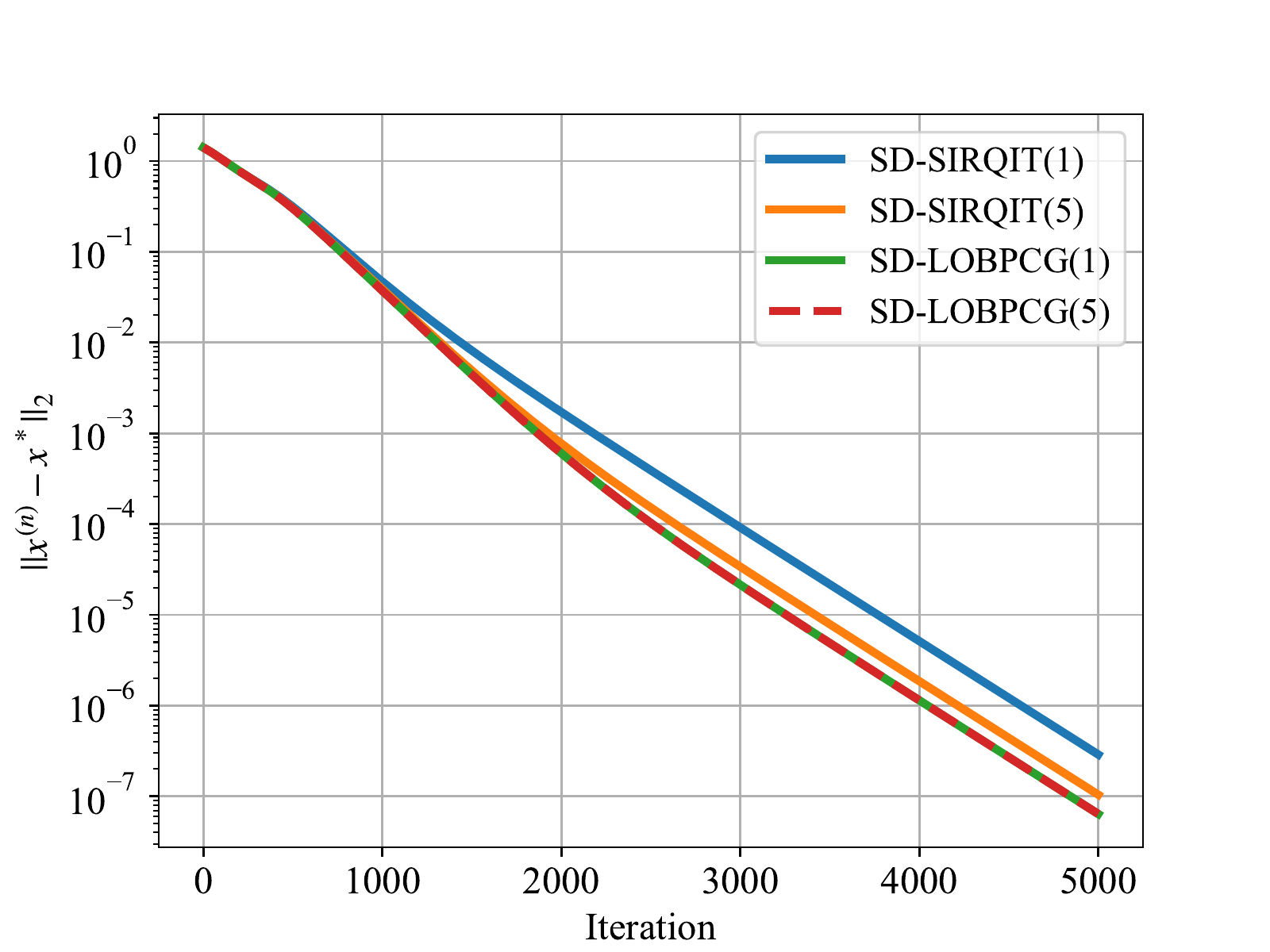}
    \caption{Plots of $||x^{(n)}-x^*||_2$ with respect to the iteration number, in which SD-SIRQIT($k$) and SD-LOBPCG($k$) represent the saddle dynamics method with SIRQIT and LOBPCG, respectively, under $k$ sub-iterations.}
    \label{fig_biggs}
\end{figure}
{
{\bf Example 3: A high-dimensional illustration.}
In this case, we consider the following high-dimensional modified Rosenbrock function
	\begin{equation*}
		B_h(x) = B(x) - \sum_{i=1}^h s_i\arctan^2(x_i-x^*_i) + \sum_{j=h+1}^d s_j\arctan^2(x_j-x^*_j),
	\end{equation*}
	where $B(x)$ is the $d-$dimensional Rosenbrock function 
	\begin{equation*}
		B(x) = \sum_{i=1}^{d-1} [100(x_{i+1} - x_i^2)^2 + (1-x_i)^2].
	\end{equation*}
 We set the dimension $d=400$, $h=20$, $s_i=200$ for $i=1,2,..,20$, $s_j=1$ for $j=21,22,...,d$, $x^{(0)}=[1.05,0.95,1.05,0.95,1,1,...,1]^\top$ and the step size $\beta=2\times10^{-4}$. In this case, $x^*:= [1,1,...,1]^\top$ is an index-5 saddle point of $B_h(x)$. \par 
	
	In numerical computations, we implement the simultaneous Rayleigh-quotient iteration (SIRQIT) method \cite{longsine1980simultaneous} and LOBPCG method \cite{knyazev2001toward} to compute eigenvectors. The sub-iteration number for eigenvector computation is chosen as 1 or 5 for each method. The observations from Figure \ref{fig_rosen} are almost the same as those in Example 2.
	
	\begin{figure}[h]
		\centering
		\includegraphics[height=6cm,width=7cm]{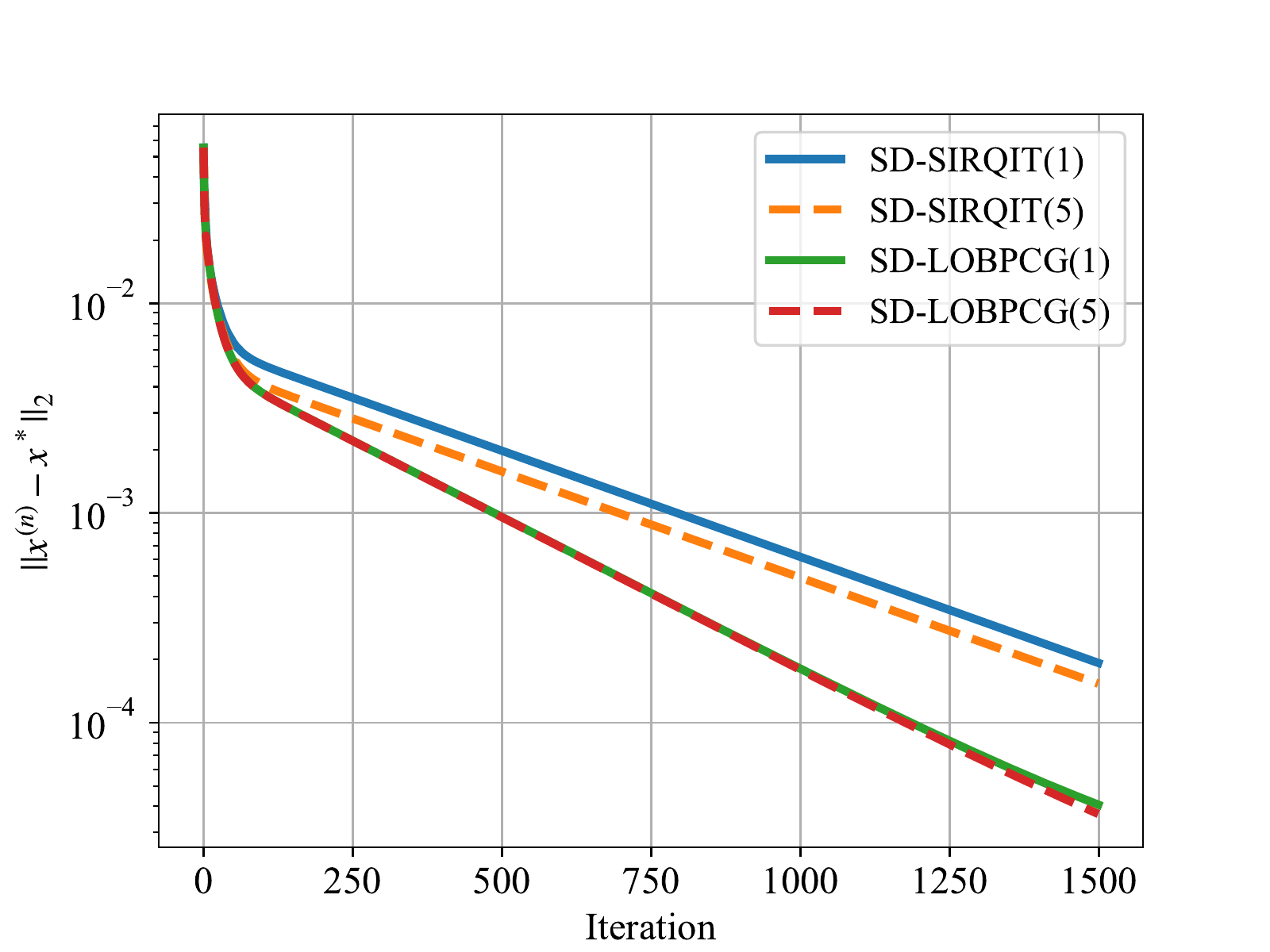}
		\caption{{Plots of $||x^{(n)}-x^*||_2$ with respect to the iteration number, in which SD-SIRQIT($k$) and SD-LOBPCG($k$) represent the saddle dynamics method with SIRQIT and LOBPCG, respectively, under $k$ sub-iterations.}}
		\label{fig_rosen}
	\end{figure}
	}

\section{Concluding remarks}
In this paper we develop systematical and novel analysis techniques to prove the local linear convergence rates of the discrete saddle dynamics, which is inspired by the local analysis of gradient descent method in the optimization theory. Our theoretical findings point out that the local curvature of the saddle point and the accuracy of the eigenvector computation are main factors that affect the convergence rates, which provides theoretical explanations for the performance of the algorithm and compensates for the convergence theory of discrete high-index saddle dynamics.  \par 

There are several potential extensions of the current work. The developed analysis techniques could be generalized to high-index saddle dynamics for non-gradient systems \cite{YinSCM}
\begin{equation*}
    \left\{
    \begin{aligned}
    \frac{dx}{dt} & = \beta\bigg(I-2\sum_{i=1}^kv_iv_i^\top\bigg)F(x),\\
    \frac{dv_i}{dt} &= \gamma\bigg[(I-v_iv_i^\top)J(x)v_i -\sum_{j=1}^{i-1}v_jv_j^\top(J(x)+J^\top(x))v_i\bigg],\ 1\leq i \leq k,
    \end{aligned}
    \right.
\end{equation*}
 where $F(x)$ refers to the natural force and $J(x)$ is the Jacobian of $F(x)$.  To be specific, the current analysis framework could be employed to analyze the saddle dynamics for non-gradient systems by replacing the spectrum of the Hessian with the eigenvalues of the Jacobian. Similarly, one could apply the techniques in this work to study the convergence of numerical discretizations of the constrained saddle dynamics \cite{yin2020constrained}. How to design the acceleration strategies for saddle dynamics to improve the convergence rates is another interesting but challenging extension. { Furthermore, if some eigenvalue of $\nabla^2E(x^*)$ approaches 0, the lower bound $\mu$ approaches 0 and $\kappa=L/\mu$ approaches infinity. According to our analysis, the convergence rate tends to 1, which causes slow convergence and thus requires the acceleration strategies. We will continue to investigate these interesting problems in the future.} 

\section*{Acknowledgments}

This work was partially supported by the National Key R\(\&\)D Program of China 2021YFF1200500; the National Natural Science Foundation of China 12050002; the International Postdoctoral Exchange Fellowship Program (Talent-Introduction Program) No. YJ20210019; the China Postdoctoral Science Foundation Nos. 2021TQ0017 and 2021M700244.

\bibliographystyle{abbrv}
\bibliography{reference}

\begin{thebibliography}{10}

\bibitem{baker1986algorithm}
J.~Baker.
\newblock An algorithm for the location of transition states.
\newblock {\em J. Comput. Chem.}, 7(4):385--395, 1986.

\bibitem{cances2009some}
E.~Canc{\`e}s, F.~Legoll, M.-C. Marinica, K.~Minoukadeh, and F.~Willaime.
\newblock Some improvements of the activation-relaxation technique method for
  finding transition pathways on potential energy surfaces.
\newblock {\em J. Chem. Phys.}, 130(11):114711, 2009.

\bibitem{cheng2010nucleation}
X.~Cheng, L.~Lin, W.~E, P.~Zhang, and A.-C. Shi.
\newblock Nucleation of ordered phases in block copolymers.
\newblock {\em Phys. Rev. Lett.}, 104:148301, Apr 2010.

\bibitem{vanden2010transition}
W.~E and E.~Vanden-Eijnden.
\newblock Transition-path theory and path-finding algorithms for the study of
  rare events.
\newblock {\em Annu. Rev. Phys. Chem.}, 61(1):391--420, 2010.

\bibitem{gad}
W.~E and X.~Zhou.
\newblock The gentlest ascent dynamics.
\newblock {\em Nonlinearity}, 24(6):1831, 2011.

\bibitem{gao2015iterative}
W.~Gao, J.~Leng, and X.~Zhou.
\newblock An iterative minimization formulation for saddle point search.
\newblock {\em SIAM J. Numer. Anal.}, 53(4):1786--1805, 2015.

\bibitem{golub1996matrix}
G.~H. Golub and C.~F. Van~Loan.
\newblock {\em Matrix Computations}.
\newblock Johns Hopkins University Press, 4 edition, 2013.

\bibitem{gould2016dimer}
N.~Gould, C.~Ortner, and D.~Packwood.
\newblock A dimer-type saddle search algorithm with preconditioning and
  linesearch.
\newblock {\em Math. Comp.}, 85(302):2939--2966, 2016.

\bibitem{Han2019transition}
Y.~Han, Y.~Hu, P.~Zhang, and L.~Zhang.
\newblock Transition pathways between defect patterns in confined nematic
  liquid crystals.
\newblock {\em J. Comput. Phys.}, 396:1--11, 2019.

\bibitem{Han2021}
Y.~Han, J.~Yin, Y.~Hu, A.~Majumdar, and L.~Zhang.
\newblock Solution landscapes of the simplified ericksen--leslie model and its
  comparisonwith the reduced landau--degennes model.
\newblock {\em Proc. R. Soc. A.}, 477(2253):20210458, 2021.

\bibitem{han2021solution}
Y.~Han, J.~Yin, P.~Zhang, A.~Majumdar, and L.~Zhang.
\newblock Solution landscape of a reduced {L}andau--de {G}ennes model on a
  hexagon.
\newblock {\em Nonlinearity}, 34(4):2048--2069, 2021.

\bibitem{henkelman2002methods}
G.~Henkelman, G.~J{\'o}hannesson, and H.~J{\'o}nsson.
\newblock Methods for finding saddle points and minimum energy paths.
\newblock In {\em Theoretical Methods in Condensed Phase Chemistry}, pages
  269--302. Springer Netherlands, Dordrecht, 2002.

\bibitem{henkelman1999dimer}
G.~Henkelman and H.~J{\'o}nsson.
\newblock A dimer method for finding saddle points on high dimensional
  potential surfaces using only first derivatives.
\newblock {\em J. Chem. Phys.}, 111(15):7010--7022.

\bibitem{knyazev2001toward}
A.~V. Knyazev.
\newblock Toward the preconditioned eigensolver: Locally optimal block
  preconditioned conjugate gradient method.
\newblock {\em SIAM J. Sci. Comput.}, 23(2):517--541, 2001.

\bibitem{ortner2017}
A.~Levitt and C.~Ortner.
\newblock Convergence and cycling in walker-type saddle search algorithms.
\newblock {\em SIAM J. Numer. Anal.}, 55(5):2204--2227, 2017.

\bibitem{li2001minimax}
Y.~Li and J.~Zhou.
\newblock A minimax method for finding multiple critical points and its
  applications to semilinear {PDE}s.
\newblock {\em SIAM J. Sci. Comput.}, 23(3):840--865, 2001.

\bibitem{longsine1980simultaneous}
D.~E. Longsine and S.~F. McCormick.
\newblock Simultaneous rayleigh-quotient minimization methods for {$Ax=\lambda
  Bx$}.
\newblock {\em Lin. Alg. Appl.}, 34:195--234, 1980.

\bibitem{more1981testing}
J.~J. Mor{\'e}, B.~S. Garbow, and K.~E. Hillstrom.
\newblock Testing unconstrained optimization software.
\newblock {\em ACM Trans. Math. Softw.}, 7(1):17--41, 1981.

\bibitem{nesterov2003introductory}
Y.~Nesterov.
\newblock {\em Introductory lectures on convex optimization: A basic course},
  volume~87.
\newblock Springer Science \& Business Media, 2003.

\bibitem{samanta2014microscopic}
A.~Samanta, M.~E. Tuckerman, T.-Q. Yu, and W.~E.
\newblock Microscopic mechanisms of equilibrium melting of a solid.
\newblock {\em Science}, 346(6210):729--732, 2014.

\bibitem{wales2003energy}
D.~Wales.
\newblock {\em Energy landscapes: Applications to clusters, biomolecules and
  glasses}.
\newblock Cambridge University Press, Cambridge, UK, 2003.

\bibitem{wang2021modeling}
W.~Wang, L.~Zhang, and P.~Zhang.
\newblock Modeling and computation of liquid crystals.
\newblock {\em Acta Numerica}, 30:765--851, 2021.

\bibitem{wang2010phase}
Y.~Wang and J.~Li.
\newblock Phase field modeling of defects and deformation.
\newblock {\em Acta Materialia}, 58(4):1212--1235, 2010.

\bibitem{yin2020constrained}
J.~Yin, Z.~Huang, and L.~Zhang.
\newblock Constrained high-index saddle dynamics for the solution landscape
  with equality constraints.
\newblock {\em arXiv preprint arXiv:2011.13173}, 2020.

\bibitem{Yin2020nucleation}
J.~Yin, K.~Jiang, A.-C. Shi, P.~Zhang, and L.~Zhang.
\newblock Transition pathways connecting crystals and quasicrystals.
\newblock {\em Proc. Natl. Acad. Sci. U.S.A.}, 118(49), 2021.

\bibitem{YinPRL}
J.~Yin, Y.~Wang, J.~Z. Chen, P.~Zhang, and L.~Zhang.
\newblock Construction of a pathway map on a complicated energy landscape.
\newblock {\em Phys. Rev. Lett.}, 124(9):090601, 2020.

\bibitem{YinSCM}
J.~Yin, B.~Yu, and L.~Zhang.
\newblock Searching the solution landscape by generalized high-index saddle
  dynamics.
\newblock {\em Sci. China Math.}, 64(8):1801--1816, 2021.

\bibitem{yin2019high}
J.~Yin, L.~Zhang, and P.~Zhang.
\newblock High-index optimization-based shrinking dimer method for finding
  high-index saddle points.
\newblock {\em SIAM J. Sci. Comput.}, 41(6):A3576--A3595, 2019.

\bibitem{yin2022solution}
J.~Yin, L.~Zhang, and P.~Zhang.
\newblock Solution landscape of the onsager model identifies non-axisymmetric
  critical points.
\newblock {\em Physica D}, 430:133081, 2022.

\bibitem{zhangdu2012}
J.~Zhang and Q.~Du.
\newblock Shrinking dimer dynamics and its applications to saddle point search.
\newblock {\em SIAM J. Numer. Anal.}, 50(4):1899--1921, 2012.

\bibitem{zhang2007morphology}
L.~Zhang, L.-Q. Chen, and Q.~Du.
\newblock Morphology of critical nuclei in solid-state phase transformations.
\newblock {\em Phys. Rev. Lett.}, 98:265703, Jun 2007.

\bibitem{zhang2016optimization}
L.~Zhang, Q.~Du, and Z.~Zheng.
\newblock Optimization-based shrinking dimer method for finding transition
  states.
\newblock {\em SIAM J. Sci. Comput.}, 38(1):A528--A544, 2016.

\bibitem{zhang2016recent}
L.~Zhang, W.~Ren, A.~Samanta, and Q.~Du.
\newblock Recent developments in computational modelling of nucleation in phase
  transformations.
\newblock {\em Npj Computational Materials}, 2:16003, 2016.

\bibitem{zhang2021optimalorder}
L.~Zhang, P.~Zhang, and X.~Zheng.
\newblock Optimal-order error estimates for euler discretization of high-index
  saddle dynamics.
\newblock {\em SIAM J. Numer. Anal.}, to appear.

\end{thebibliography}

\end{document}